\newtheorem{thm}{Theorem}[section]
\newtheorem{lemma}[thm]{Lemma}
\newtheorem{lem}[thm]{Lemma}
\newtheorem{prop}[thm]{Proposition}
\newtheorem{cor}[thm]{Corollary}
\newtheorem{remark}[thm]{Remark}
\newtheorem{question}[thm]{Question}
\newtheorem{example}[thm]{Example}
\author[Mitja Mastnak]{Mitja Mastnak$^{\star}$}
\thanks{$^{\star}$ Supported by the Natural Sciences and Engineering Research Council (NSERC) of Canada, Discovery Grants 371994-2014 and 8809-2002-RGPIN}
\address{Department of Mathematics and Computing Science,
Saint Mary's University, 923 Robie Street, Halifax, NS, B3H3C3, Canada}
\email{mmastnak@cs.smu.ca}
\author{Heydar Radjavi$^{\star}$}
\address{
Department of Pure Mathematics,
University of Waterloo,
Waterloo, Ontario,
Canada N2L3G1, Canada}
\email{hradjavi@uwaterloo.ca}
\title[Semigroups whose ring commutators have real spectra]{Matrix semigroups whose ring commutators have real spectra are realizable}
\def\Mn{\mathcal{M}_n(\mathbb{C})}
\def\M{\mathcal{M}}
\def\S{\mathcal{S}}
\def\G{\mathcal{G}}
\def\J{\mathcal{J}}
\def\C{\mathcal{C}}
\def\H{\mathcal{H}}
\def\K{\mathcal{K}}
\def\CC{\mathbb{C}}
\def\RR{\mathbb{R}}
\def\A{\mathcal{A}}
\def\H{\mathcal{H}}
\def\D{\mathcal{D}}
\def\Cx{\mathbb{C}^{\times}}
\def\PP{\mathcal{P}}
\def\A{\mathcal{A}}
\def\avg{\operatorname{avg}}
\def\Pat{\operatorname{Pat}}
\def\tr{\operatorname{tr}}
\def\V{\mathcal{V}}
\def\N{\mathcal{N}}
\def\diag #1{\operatorname{diag}\left(#1\right)}
\begin{document}
\begin{abstract}
We study matrix semigroups in which ring commutators have real spectra.
We prove that irreducible semigroups with this property are
simultaneously
similar to semigroups of real-entried matrices.  We also obtain a
structure theorem for
compact groups satisfying the property under investigation.
\end{abstract}
\maketitle
\section{Introduction}
Let $\G$ be an irreducible group of complex matrices, that is, when viewed as linear operators on a finite-dimensional complex vector space, the members of $\G$ have no common invariant subspace other then $\{0\}$ and the whole space.
There are certain known conditions under which $\G$ is realizable, i.e., $\G$ is simultaneously similar to a group of real matrices.  For example, let $\varphi$ be a rank-one functional on the algebra $\mathcal{M}_n(\mathbb{C})$ of all $n\times n$ complex matrices (in other words, $\varphi(M)=\tr(TM)$ for all $M$ in $\M_n(\mathbb{C})$, where $T$ is a fixed matrix of rank one).  If $\varphi(\G)\subseteq\mathbb{R}$, then $\G$ is realizable (see \cite{BMR}, \cite{BMR1}, and \cite{RY}).

It follows from \cite{B} that if the spectra of members of $\G$ are all real, then $\G$ is realizable.  We consider the effect of weaker hypotheses: What can we get, for example, if we merely assume that the members of the commutator subgroup have real spectra?  For compact groups, this is equivalent to the assumption that the commutator subgroup consists of involutions.  In this case we conclude that $\G$ is essentially a signed permutation group with commutative pattern.  There is a weaker hypothesis whose effect we have not been able to ascertain: What if we know only that every commutator is an involution?

It is interesting that if ring commutators are considered, as opposed to group commutators, then the corresponding weak assumption on $\G$ gives the desired result: If $AB-BA$ has real spectrum for every $A$ and $B$ in a compact group $\G$, then $\G$ is realizable and finite.  Furthermore $\G$ has a very simple structure given in Theorem \ref{thm-maingrp}.  We also consider  some semigroups whose ring commutators have real spectra.
\section{Preliminaries}
\subsection{Monomial matrix groups}
A subspace $U$ of $\mathbb{C}^n$ is called a {\em standard subspace} if it is spanned by a subset of the standard basis $(e_i)_{i=1}^n$, where $e_i=(0,\ldots, 0,1,0,\ldots ,0)$ with $1$ at the $i$-th position.  A matrix is called an {\em indecomposable matrix} if it has no nontrivial standard invariant subspaces.  A set of matrices is called indecomposable if it has no nontrivial common standard invariant subspaces.  These notions are usually discussed in the context of non-negative-entried matrices, but in this note the notions will also be studied for more general matrices and sets of matrices.

We say that an invertible matrix is {\em monomial} or a {\em weighted permutation} if it has exactly one nonzero entry in each row (or, equivalently, exactly one nonzero entry in each column).  The nonzero entries are often referred to as {\em weights}.  If all weights are equal to $1$ then a matrix is referred to as a {\em permutation}, and if all the weights belong to $\{\pm 1\}$, then we call the matrix in question a {\em signed permutation}.

We say that a set of matrices is monomial if every member is monomial.  We say that a group of matrices is a (signed) permutation group if every member is a (signed) permutation. We say that a set of matrices is {\em monomializable} if it is simultaneously similar to a set of monomial matrices.  

The {\em pattern} $\Pat(A)$ of a monomial matrix $A$ is the permutation matrix obtained by replacing all nonzero entries in $A$ by $1$'s.  The pattern $\Pat(\G)$ of a monomial matrix group $\G$ is the permutation matrix group obtained by replacing every member of $\G$ by its pattern.  We say that $\G$ has commutative pattern if its pattern group is commutative.  We remark that a monomial group $\G$ is indecomposable if and only if its pattern group acts transitively on the set $\{e_1,\ldots, e_n\}$.

We will frequently deal with {\em tensor products} of matrices.  Throughout the paper we use the canonical isomorphism $\mathcal{M}_{n_1}(\mathbb{C})\otimes \mathcal{M}_{n_2}(\mathbb{C})\stackrel{\sim}{\to}\mathcal{M}_{n_1 n_2}(\mathbb{C})$ given by identifying $A\otimes B$ with the 
$n_1\times n_1$ block matrix whose $(i,j)$-block is the $n_2\times n_2$ matrix $A_{ij}B$.  Tensor products of length $3$ or more are read from left to right, that is $A_1\otimes A_2\otimes\ldots \otimes A_k=A_1\otimes(A_2\otimes( \ldots \otimes (A_{k-1}\otimes A_k)\ldots )).$ 

For $n\in\mathbb{N}$, we use $C_n$ to denote the cycle matrix
$$
C_n = \begin{pmatrix} 0 & 0 & \ldots & 0 & 1 \\ 1 & 0 & \ldots & 0 & 0 \\
\vdots & \vdots &  & \vdots &\vdots\\
0 & 0 & \ldots & 1 & 0 \end{pmatrix}\in\M_n(\mathbb{C}),
$$
and we use $\C_n=\langle C_n \rangle\subseteq\Mn$ to denote the cyclic matrix group of order $n$ generated by $C_n$.

We will use $\D_n(\mathbb{C})$ to denote the set of all diagonal $n\times n$ complex matrices, $\D_n(\pm 1)$ to denote the group of all signed diagonal matrices, and $\D_n^+(\pm 1)$ to denote the set of all signed diagonal matrices of determinant $1$.

\begin{lemma}\label{transitive-commutative} Let  $\K\subseteq\Mn$ be a commutative monomial matrix group such that $I$ is the only diagonal element of $\K$.  Then $\K$ is indecomposable if and only if, up to monomial similarity, we have that
$$
\K=\C_{n_1}\otimes\ldots \otimes\C_{n_k}
$$
for some factorization $n=n_1\ldots n_k$.  
\end{lemma}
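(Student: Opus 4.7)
The converse direction is routine: $\C_{n_1}\otimes\cdots\otimes\C_{n_k}$ is commutative, $I$ is its only diagonal element (the only diagonal power of any $C_{n_i}$ is $I$), and its pattern group acts transitively on the standard basis, so it is indecomposable by the remark preceding the lemma.

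For the forward direction I would proceed in three stages. \emph{First}, I identify the abstract group: since $I$ is the only diagonal member of $\K$, the pattern map $\Pat\colon\K\to\Pat(\K)$ is injective, so $\K\cong\Pat(\K)$, and the remark above makes $\Pat(\K)$ a commutative transitive subgroup of $S_n$. Such a group is automatically regular (commutativity forces all point-stabilizers to coincide, hence to equal their trivial common intersection), so $|\Pat(\K)|=n$ and the structure theorem for finite abelian groups gives $\Pat(\K)\cong\mathbb{Z}/n_1\oplus\cdots\oplus\mathbb{Z}/n_k$ with $n=n_1\cdots n_k$. \emph{Second}, I normalize $\Pat(\K)$ by a permutation similarity: any two regular actions of $G:=\mathbb{Z}/n_1\oplus\cdots\oplus\mathbb{Z}/n_k$ on $n$ elements are equivalent, so relabeling the basis of $\mathbb{C}^n$ makes $\Pat(\K)$ coincide with the regular representation of $G$ on $\mathbb{C}^G\cong\mathbb{C}^{n_1}\otimes\cdots\otimes\mathbb{C}^{n_k}$, which is precisely $\Pat(\C_{n_1}\otimes\cdots\otimes\C_{n_k})$.

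\emph{Third}, I kill the diagonal parts by a diagonal similarity. After the previous step each element of $\K$ has a unique factorization $A_g=D_gP_g$ with $P_g\in\Pat(\C_{n_1}\otimes\cdots\otimes\C_{n_k})$ and $D_g$ diagonal, and the group law becomes the $1$-cocycle identity $D_{g+h}=D_g\,(P_gD_hP_g^{-1})$; writing $c(g,i):=(D_g)_{ii}$ and identifying indices with elements of $G$ via a chosen basepoint, this reads $c(g+h,i)=c(g,i)\,c(h,i-g)$. Setting $b(i):=c(i,i)^{-1}$ and specializing the cocycle identity to $h=i-g$ yields $c(g,i)=b(i-g)/b(i)$, so conjugation by $\diag(b(1),\ldots,b(n))$ sends every $A_g$ to $P_g$, and the resulting monomial similarity exhibits $\K$ as $\C_{n_1}\otimes\cdots\otimes\C_{n_k}$. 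The only non-bookkeeping step is this cocycle trivialization; cohomologically, what makes it work is that the regular action of $G$ on $(\mathbb{C}^\times)^n$ makes the latter an induced module, so $H^1(G,(\mathbb{C}^\times)^n)=0$ by Shapiro's lemma, and the explicit $b$ above is merely an unpacking of that fact.
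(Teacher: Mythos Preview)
Your argument is correct and shares the paper's overall three-stage outline: identify $\K$ with its pattern group (an abelian regular permutation group of order $n$), relabel the basis so that $\Pat(\K)=\Pat(\C_{n_1}\otimes\cdots\otimes\C_{n_k})$, and then conjugate away the diagonal weights. Where you diverge is in the third stage. The paper asserts that after the relabeling each chosen generator $G_j$ already has the tensor-product shape $I\otimes\cdots\otimes D_jC_{n_j}\otimes\cdots\otimes I$, uses $G_j^{n_j}=I$ to force $\det D_j=1$, and then conjugates by the tensor-product diagonal $X=X_1\otimes\cdots\otimes X_k$ with $X_j=\diag{1,d_1^{(j)},d_1^{(j)}d_2^{(j)},\ldots}$. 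Your cocycle computation instead treats all weights at once: the identity $c(g+h,i)=c(g,i)\,c(h,i-g)$ together with the specialization $h=i-g$ produces the explicit coboundary $b(i)=c(i,i)^{-1}$, and conjugation by $\diag{b(i)}$ kills every $D_g$ simultaneously. This buys you two things: a single closed formula for the conjugating diagonal, and no need to know that the diagonal part of each $G_j$ sits in a single tensor slot (a point the paper's write-up glosses over). The paper's route, on the other hand, is more hands-on and makes the final conjugator visibly a tensor product. Both lead to the same conclusion; yours is the cleaner packaging, and the Shapiro-lemma remark correctly identifies why the trivialization had to succeed.
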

\begin{proof}
$\K$ is an abelian group acting transitively on the set of lines $\{\CC
e_1,\ldots, \CC e_n\}$.  The action is faithful since we have that $I$ is the only diagonal element of $\K$. 
A transitive faithful action of an abelian group cannot have nontrivial elements with fixed points and hence has to be isomorphic to the left regular action of the group on itself.

Let $$\K\simeq \K_1\times\ldots\times \K_k$$ be a decomposition of the (abstract) finite
abelian group $\K$ into cyclic subgroups $\K_i$, where $\K_i$ is a cyclic group generated by
$G_i\in\K$ of order $n_i$, $i=1,\ldots, k$. The action of $\G$ on $\{\mathbb{C} e_1,\ldots, \mathbb{C} e_n\}$ can be described as follows:  re-index the set $$\{e_i : i=1,\ldots, n\} $$ as $$\{e_{i_1,\ldots, i_k} : 1\le i_j\le n_j, j=1,\ldots, k\}.$$ The action of $G_1^{a_1}\ldots G_k^{a_k}\in \K$  on $\mathbb{C} e_{i_1,\ldots, i_k}$ gives $\mathbb{C} e_{i'_1,\ldots, i'_k}$, where $i'_j=i_j+a_j\;\mathrm{mod}\; n_j$ for $j=1,\ldots, k$.  If we identify 
$$
e_{i_1,\ldots,i_k} = e_{i_1}\otimes\ldots\otimes e_{i_k},
$$
then we have that for $j=1,\ldots, k$, the element $G_j\in\K$ is equal to
$$I_{n_1}\otimes\ldots\otimes I_{n_{j-1}}\otimes D_jC_{n_j}\otimes I_{n_{j+1}}\ldots\otimes I_{n_k}$$ for some diagonal matrix $D_j\in\M_{n_j}(\mathbb{C})$.  Note that $G_j^{n_j}=\det(D_j) I_n$, so that we must have $\det(D_j)=1$.
If for $j=1,\ldots, k$ we have that $D_j=\diag{d_1^{(j)},\ldots, d_{n_j}^{(j)}}$ with $d_1^{(j)}\cdots d_{n_j}^{(j)}=1$, then let $$X_j=\diag{1,d_1^{(j)},d_1^{(j)}d_2^{(j)},\ldots, d_1^{(j)}\cdots d_{n_j-1}^{(j)}},$$ and let $X=X_1\otimes X_2\otimes\ldots\otimes X_k$.   Now observe that for each $j=1,\ldots, k$ we have that $X^{-1}G_jX = I_{n_1}\otimes\ldots\otimes I_{n_{j-1}}\otimes C_{n_j}\otimes I_{n_{j+1}}\ldots\otimes I_{n_k}$.
\end{proof}

\subsection{Block monomial matrices and Clifford's Theorem}
We say that a group $\G\subseteq\M_n(\mathbb{C})$ of matrices is block monomial with respect to a decomposition $\mathbb{C}=\V_1\oplus\ldots\oplus\V_r$ if for every $G\in\G$ and every $i\in\{1,\ldots, r\}$ there is a $j\in\{1,\ldots, r\}$ such that $G\V_i\subseteq\V_j$.  For $i=1,\ldots, r$, let $P_i$ denote the projection to $\V_i$ with respect to the decomposition in question.  We call $G_{i,j}=P_j G P_i\subseteq \mathrm{L}(\V_i,\V_j)$ the $(i,j)$-block entry of $G$.  Note that $\G$ is block monomial if and only if in each block-row every element $G\in\G$ has exactly one nonzero block entry.  If $\V_i=\mathbb{C}e_i$, $i=1,\ldots, r$, then $\G$ is block monomial if and only if it is monomial.

The following result is well-known.  We include a sketch of the proof for completeness.
\begin{prop}\label{prop-block}
Let $\G\subseteq\M_n(\mathbb{C})$ be an irreducible group of matrices that is block-monomial with respect to some decomposition $\mathbb{C}^n=\V_1\oplus\ldots\oplus \V_r$, $r>1$, $\V_i\not=0$ for $i=1,\ldots, r$.  Let $P_1,\ldots P_r$, denote the projections to the corresponding summands of this direct sum decomposition.

Then, up to simultaneous similarity, we can assume that $\V_1=\ldots=\V_r=\mathbb{C}^{n/r}$ and that the set of non-zero elements in each $(i,j)$-block
$$\mathcal{H}_{i,j}= P_i\G P_j \setminus\{0\} \subseteq \mathrm{L}(\V_i,\V_j)$$ 
is individually equal to a fixed irreducible matrix group $\mathcal{H}\subseteq \M_{n/r}(\mathbb{C})$.  We can additionally assume that each $\V_i$ is invariant for the similarity in question.
\end{prop}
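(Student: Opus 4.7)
The plan is to treat this as a classical Clifford-type argument, viewing the block-monomial structure as an induced representation from the stabilizer of $\V_1$. Block-monomiality gives a homomorphism $\pi\colon\G\to S_r$ defined by $\pi(G)(i)=j$ iff $G\V_i=\V_j$. The sum of the subspaces in any $\pi(\G)$-orbit is $\G$-invariant, so irreducibility of $\G$ forces transitivity of $\pi(\G)$; and since every $G\in\G$ is invertible, $G|_{\V_i}\colon\V_i\to\V_{\pi(G)(i)}$ is a linear isomorphism, so $\dim\V_i=n/r$ for every $i$.

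I would then let $\G_1:=\{G\in\G:G\V_1=\V_1\}$ and pick coset representatives $g_1=I,g_2,\ldots,g_r\in\G$ with $g_i\V_1=\V_i$. Fixing a basis of $\V_1$ and transporting it through $g_i$ yields a basis of $\V_i$; the combined change of basis is a similarity that simultaneously identifies every $\V_i$ with $\CC^{n/r}$. The key computation is that for $G\in\G$ with $\pi(G)(j)=i$ one has $Gg_j=g_ih_j$ with $h_j:=g_i^{-1}Gg_j\in\G_1$, and in the new coordinates the $(i,j)$-block of $G$ is precisely the matrix of $h_j|_{\V_1}$. Moreover $\{G\in\G:\pi(G)(j)=i\}=g_i\G_1g_j^{-1}$, so as $G$ varies the element $h_j$ fills out all of $\G_1$. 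Hence every $\mathcal{H}_{i,j}$ equals the common set $\mathcal{H}:=\{h|_{\V_1}:h\in\G_1\}$, which is the image of the restriction homomorphism $\G_1\to\mathrm{GL}_{n/r}(\CC)$ and in particular a group.

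Irreducibility of $\mathcal{H}$ is shown by contradiction: given a proper nonzero $\mathcal{H}$-invariant $W\subsetneq\V_1$, the subspace $\widetilde W:=\bigoplus_i g_iW$ is $\G$-invariant, because the identity $Gg_j=g_{\pi(G)(j)}h_j$ sends $g_jW$ into $g_{\pi(G)(j)}W\subseteq\widetilde W$; but $0\ne\widetilde W\ne\CC^n$, contradicting irreducibility of $\G$. The addendum about invariance of each $\V_i$ is then handled by first performing a preliminary similarity arranging $\V_i$ to be the $i$-th standard coordinate block of $\CC^n$, after which the basis-normalization above only reshuffles vectors within each $\V_i$ and so corresponds to a block-diagonal similarity. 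The main bookkeeping hurdle is keeping the permutation $\pi$, the cosets of $\G_1$, and the block positions consistent; once the correspondence $G\leftrightarrow(\pi(G),(h_j)_j)$ is fixed, everything else is formal.
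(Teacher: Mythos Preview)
Your argument is correct and is at bottom the same normalization as the paper's: the paper picks, for each $i$, some $G^{(i)}\in\G$ with nonzero $(1,i)$-block $X_i$ and conjugates by the block-diagonal matrix $\operatorname{diag}(X_1,\ldots,X_r)$ to force $I_{n/r}\in\mathcal{H}_{1,i}$, then deduces $\mathcal{H}_{i,j}=\mathcal{H}_{1,1}$ from the semigroup relations $\mathcal{H}_{i,j}\mathcal{H}_{j,k}\subseteq\mathcal{H}_{i,k}$ and $\mathcal{H}_{i,j}^{-1}\subseteq\mathcal{H}_{j,i}$. Your coset representatives $g_i$ are exactly the inverses of the paper's $G^{(i)}$, and transporting the basis of $\V_1$ through $g_i$ is the same block-diagonal conjugation; the identification of every $\mathcal{H}_{i,j}$ with $\G_1|_{\V_1}$ via $h_j=g_i^{-1}Gg_j$ replaces the paper's chain of block inclusions by a single group-theoretic observation. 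One point in your favor: you actually supply the irreducibility of $\mathcal{H}$ via the $\widetilde W=\bigoplus_i g_iW$ argument, whereas the paper's proof asserts irreducibility in the statement but does not verify it in the proof.
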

\begin{proof}  By irreducibility of $\G$ we have that each set $\H_{i,j}$ is non-empty.  Since $\G$ is a group we have that elements of $\H_{i,j}$ are invertible and therefore we must have that $\dim\V_i=\dim\V_j$ for all $i,j$.  From now on assume that $\V_1=\ldots=\V_r=\mathbb{C}^{n/r}$.

For $G\in\G$ let $G_{i,j}=P_i G P_j$ denote the $(i,j)$-block of $G$.  Let $G,H\in\G$ and $i,j,k$ be such that $G_{i,j}\not=0$ and $H_{j,k}\not=0$.  Then, due to block-monomiality we have that $G_{i,\ell}=0=H_{\ell,k}=0$ for all  $\ell\not=j$.  Hence $(GH)_{i,k}=G_{i,j}H_{j,k}$.  Hence for all $i,j,k$ we have that $\H_{i,j}\H_{j,k}\subseteq \H_{i,k}$.  Also note that for $G\in\G$ with $G_{i,j}\not=0$ we have that $(G^{-1})_{j,i}=(G_{i,j})^{-1}$ and hence for all $i,j$ we have $\H_{i,j}^{-1}\subseteq \H_{j,i}$.

We will now explain, why we can, up to a block-diagonal similarity assume that for $i=1,\ldots, r$ we have that $I_{n/s}\in\H_{1,i}$.  Fix $G^{(1)},\ldots, G^{(r)}\in\G$ such that for all $i$ we have $X_i:=(G^{(i)})_{1,i}\not=0$.  Additionally assume that $G^{(1)}=I$ and hence $X_1=I_{n/r}$.  Let $X=\diag{X_1,\ldots, X_r}$ and note that
via simultaneous similarity $G\mapsto X G X^{-1}$ we have that $X_1=X_2=\ldots=X_r=I_{n/r}$.

From now on assume that for $i=1,\ldots, r$ we have $I_{n/r}\in \H_{1,i}$.  Hence we also have that $I_{n/r}=I_{n/r}^{-1}\in \H_{i,1}$.  Let $i,j\in\{1,\ldots, r\}$. Inclusion $\H_{i,1} = \H_{i,1}I \subseteq \H_{i,1}\H_{i,1} \subseteq \H_{1,1}$ yields that $\H_{i,1}\subseteq \H_{1,1}$.  Similarly $\H_{1,i}\subseteq \H_{1,1}$.  On the other hand $\H_{1,1}I\subseteq\H_{1,1}\H_{1,i}\subseteq \H_{1,i}$ so that also $\H_{1,1}\subseteq\H_{1,i}$.  Hence $\H_{1,i}=\H_{1,1}=\H$.  Similarly $\H_{j,1}=\H$.  Now $\H_{1,i}\H_{i,j}\H_{j,1}\subseteq\H_{1,1}$ yields that $\H_{i,j}\subseteq \H_{1,1}$ and $\H_{i,1}\H_{1,1}\H_{1,j}\subseteq\H{i,j}$ yields that $\H_{1,1}\subseteq \H_{i,j}$; so that $\H_{i,j}=\H_{1,1}=\H$.
\end{proof}

An important tool in our considerations is Clifford's Theorem \cite[Theorem 1, p. 113]{S} (see also the original reference \cite{C}).  Below we state it in terms of block-monomial matrices (combined with the above proposition).

\begin{thm}[Clifford's Theorem]\label{thm-Clifford} Let $\G\subseteq\M_n(\mathbb{C})$ be an irreducible group and let $\N$ be a reducible normal subgroup such that not all irreducible representations of $\N$ on $\mathbb{C}^n$ are pairwise isomorphic (or, equivalently, there is no similarity under which $\N=I_m\otimes \N_0$ for some irreducible group $\N_0\subseteq \M_{n/m}(\mathbb{C})$).  Let $\V_1,\ldots, \V_r$ be all $\N$-invariant subspaces of $\mathbb{C}^n$ that are maximal such that for each fixed $i=1,\ldots, r$ we have that all irreducible sub-representations of $\N$ on $\V_i$ are isomorphic (as representations). 

Then $r>1$, for each $i=1,\ldots, r$, $\dim\V_i=n/r$, $\mathbb{C}=\V_1\oplus\ldots\oplus\V_r$, and
and $\G$ is block-monomial with respect to this direct sum decomposition. 

We can additionally assume, up to simultaneous similarity, that 
 for all $i,j=1,\ldots, r$, we have that the set of non-zero elements of the block $P_i\G P_j\subseteq \mathrm{L}(\V_j, \V_i)=\mathcal{M}_{n/r}(\mathbb{C})$  is equal to a fixed irreducible group $\H\subseteq\M_{n/r}(\mathbb{C})$ (here $P_i$ denotes the projection to the $i$-th summand in the direct sum decomposition $\mathbb{C}=\V_1\oplus\ldots\oplus\V_r$).
\end{thm}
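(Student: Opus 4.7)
The plan is to follow the classical proof of Clifford's theorem and then invoke Proposition~\ref{prop-block} to obtain the uniform block structure.

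First I would decompose $\mathbb{C}^n$ into its isotypic components for $\N$: grouping a complete reduction of $\mathbb{C}^n$ under $\N$ according to the isomorphism class of the irreducible constituents produces subspaces $\V_1,\ldots,\V_r$ with $\mathbb{C}^n = \V_1\oplus\ldots\oplus\V_r$, and these are exactly the maximal $\N$-invariant subspaces described in the statement. The assumption that not every irreducible $\N$-subrepresentation of $\mathbb{C}^n$ is isomorphic to every other is precisely that $r > 1$.

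The key step is to show that $\G$ permutes the components $\V_i$. For $G\in\G$, $v\in\V_i$, and $n\in\N$, normality of $\N$ gives $nGv = G(G^{-1}nG)v \in G\V_i$, so $G\V_i$ is $\N$-invariant. Moreover its $\N$-module structure is the structure on $\V_i$ precomposed with the conjugation automorphism $n \mapsto G^{-1}nG$ of $\N$; such an automorphism merely permutes the isomorphism classes of irreducible $\N$-representations, so the irreducible constituents of $G\V_i$ remain pairwise isomorphic. Since $\V_j$ contains every $\N$-submodule whose constituents lie in its particular isomorphism class, $G\V_i \subseteq \V_{\sigma(i)}$ for some index $\sigma(i)$; running the same argument for $G^{-1}$ and comparing with the direct-sum decomposition upgrades the inclusion to an equality $G\V_i = \V_{\sigma_G(i)}$, for a permutation $\sigma_G$ of $\{1,\ldots,r\}$. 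This is exactly block-monomiality of $\G$ with respect to $\V_1\oplus\ldots\oplus\V_r$.

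Irreducibility of $\G$ then yields the remaining claims of the first part at once: the span of any $\G$-orbit among the $\V_i$ is $\G$-invariant, so $\G$ acts transitively on $\{\V_1,\ldots,\V_r\}$, and therefore $\dim \V_1=\ldots=\dim\V_r=n/r$. The assertion about the nonzero $(i,j)$-blocks coinciding, up to a block-diagonal similarity fixing each $\V_i$, with a single fixed irreducible group $\H\subseteq\M_{n/r}(\mathbb{C})$ is then an immediate application of Proposition~\ref{prop-block} to the decomposition just produced. The only delicate point, worth acknowledging in passing but not an obstacle in the settings considered in this paper, is that the isotypic decomposition presumes $\N$ acts semisimply on $\mathbb{C}^n$; this is automatic for finite and compact groups and is normally built into the hypotheses under which Clifford's theorem is stated.
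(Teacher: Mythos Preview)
Your sketch is correct and follows the classical argument for Clifford's theorem, finishing with an appeal to Proposition~\ref{prop-block} exactly as the paper intends. Note, however, that the paper does not supply its own proof of this statement: it is recorded as a known result (with references to Suprunenko and Clifford's original paper) and closed with a bare \qed, the additional block-uniformity clause being the content of the preceding Proposition~\ref{prop-block}. So there is nothing to compare against beyond confirming that your outline matches the standard proof and that your invocation of Proposition~\ref{prop-block} is the step the paper explicitly packages for this purpose. Your closing caveat about semisimplicity of the $\N$-action is apt and harmless in the paper's setting.
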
 
\qed

\subsection{Group actions and averaging}
Let $\G$ be a group containing an abelian normal subgroup $\D$.  Then $\G$ acts on $\D$ (on the right) by $D^G=G^{-1}DG$, for $G\in \G$ and $D\in\D$.  If $\K$ is a finite subgroup of $\G$ and $D\in\D$, then  we abbreviate $$\avg_\K(D) = \prod_{K\in\K} D^K$$ (the notation $\prod$ is unambiguous as $\D$ is commutative).  If $G\in\G$ is an element of finite order $m$, then we also write 
$$\avg_G(D)=\avg_{\langle G \rangle}(D) = D D^{G}\ldots D^{G^{m-1}}.$$  

Note that elements $G\in\G$ and $D\in \D$ commute if and only if the action of $G$ on $D$ is trivial, i.e., $D^G=D$.  Suppose now that the order of $G\in\G$ is odd and that the order of $D\in\D$ is two.  Then we have that \textsl{$D$ and $G$ commute if and only if $\avg_G(D)=D$.}  This observation will play an important role throughout the paper.

In the applications below $\G$ will be a signed permutation matrix group and $\D$ will be the subgroup of diagonal matrices in $\G$. 

\subsection{Monomial groups with no diagonal commutation}

Let $\G$ be a monomial matrix group and let $\D\subseteq\G$ be the subgroup of all diagonal matrices in $\G$.  Note that the pattern group $\Pat(\G)$ acts naturally on $\D$ as for every $G\in \G$ and $D\in \D$ we have that $D^G=D^{\Pat(G)}$.

We say that $\G$ {\em has no diagonal commutation} if every nontrivial element of the pattern group $\Pat(\G)$ acts nontrivially on every nonscalar element of $\D$.  Or, equivalently, if for every $G\in\G\backslash \D$ and every $D\in\mathcal{D}\backslash \mathbb{C}I$ we have that $GD\not=DG$. 

Fix an odd natural number $n>1$ and an indecomposable abelian permutation matrix group $\K\subseteq\Mn$.  Below we describe signed diagonal groups $\J_\K,\J_\K^+\subseteq \Mn$ that will play an important role in the paper.  We define them as follows:
$$
\J_\K=\bigl\{J\in\D_n(\pm 1) : \forall G\in\K\backslash\{I\}, \avg_G(J)=\det(J) I\bigr\},
$$
and
$$
\J_\K^+=\{J\in \J_\K : \det(J)=1\}.
$$
If $\K=\C_n$, then we abbreviate $\J_n=\J_{\C_n}$ and $\J_n^+=\J_{\C_n}^+$.
Note that 
$$\J_\K=\J_K^+\cup (-\J_K^+).$$
Observe also that $\J_\K$ is $\K$-stable and hence $\K\J_\K$ is a group (as for $J,L\in\J_n$ and $G,H\in\K$ we have that $(GJ)(HL)=(GH)(J^HL)$).  Abstractly this group is a semidirect product of $\J_\K$ and $\K$.  Below we will describe the structure of the group $\J_\K$.  We will, among other things, prove that $\J_\K$ is nonscalar if and only if $\K$ is cyclic (and therefore, up to simultaneous permutational similarity, equal to $\C_n$).

\begin{lem}
Let $G_1,\ldots, G_k$ be generators of all subgroups of $\K$ of prime order and let
$J\in\D_n(\pm 1)$.  Then $J\in\J_\K^+$ if and only if for all $i=1,\ldots, k$ we have that $\avg_{G_i}(J)=I$.
\end{lem}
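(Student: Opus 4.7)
The plan is a direct reduction: the conditions defining $\J_\K^+$ are indexed by all non-identity elements of $\K$, and I will show that each $\avg_G(J)=I$ is forced by the finitely many conditions at the prime-order generators. The forward direction is immediate since every $G_i$ is non-identity, so only the converse requires work. Given $\avg_{G_i}(J)=I$ for $i=1,\ldots,k$, I must establish (a) $\det(J)=1$ and (b) $\avg_G(J)=I$ for every $G\in\K\setminus\{I\}$.

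For (a), I would take determinants. If $G_i$ has prime order $p_i$, then $\det(J)^{p_i}=\det(\avg_{G_i}(J))=1$. Since $\K$ is an indecomposable abelian permutation group whose only diagonal element is $I$, Lemma \ref{transitive-commutative} gives $|\K|=n$, and the standing hypothesis that $n$ is odd makes every $p_i$ an odd prime. Combined with $\det(J)\in\{\pm 1\}$, this forces $\det(J)=1$.

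For (b), fix $G\in\K\setminus\{I\}$ of order $m$, choose a prime $p\mid m$, and let $H=\langle G^{m/p}\rangle$ be the unique subgroup of order $p$ inside $\langle G\rangle$. Then $H$ is a prime-order subgroup of $\K$, hence coincides with $\langle G_i\rangle$ for some $i$, and $\avg_H(J)=\avg_{G_i}(J)=I$ since this product depends only on the subgroup, not on the chosen generator. Decomposing $\langle G\rangle=\bigsqcup_j g_jH$ into cosets and using that the $\K$-action on the abelian group $\D_n(\pm 1)$ is by group automorphisms, I would regroup
$$
\avg_G(J)=\prod_j\Bigl(\prod_{h\in H}J^h\Bigr)^{g_j}=\prod_j\avg_H(J)^{g_j}=\prod_j I=I.
$$

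I do not expect a real obstacle. The only point worth flagging is that both hypotheses earn their keep: the odd-$n$ assumption is what lifts $\det(J)^{p_i}=1$ to $\det(J)=1$, while Lemma \ref{transitive-commutative} is what guarantees $p_i\mid n$ so that "odd" applies.
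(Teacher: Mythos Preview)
Your argument is correct and matches the paper's proof essentially verbatim: both reduce $\avg_G(J)$ to a product of conjugates of $\avg_{G^{m/p}}(J)$ via the coset decomposition of $\langle G\rangle$ over its order-$p$ subgroup. The only difference is that you explicitly verify $\det(J)=1$ from $\det(J)^{p_i}=1$ with $p_i$ odd, whereas the paper leaves this implicit (it follows anyway once $\avg_G(J)=I$ for any nontrivial $G$, since $|\K|=n$ is odd).
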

\begin{proof}  If $\avg_G(J)=I$ for all $G\in\K\setminus\{I\}$, then we clearly also have that for $i=1,\ldots, k$, $\avg_{G_i}(J)=I$.

Now assume that for $i=1,\ldots, k$ we have that $\avg_{G_i}(J)=I$.  Let $G\in\K\setminus\{I\}$ be of order $m$.  Let $p$ be a prime dividing $m$.  Then $G^{m/p}$ is of order $p$ and hence there is an $i$ such that $\langle G^{m/p}\rangle = \langle G_i\rangle$.  Hence we have $\avg_{G^{m/p}}(J)=\avg_{G_i}(J)=I$ and therefore
$\avg_{G}(J)=\avg_{G^{m/p}}(J)\avg_{G^{m/p}}(J)^G\ldots \avg_{G^{m/p}}(J)^{G^{(m/p)-1}}=
I\cdot I^G\cdot\ldots \cdot I^{G^{(m/p)-1}}=I$.
\end{proof}

\begin{lem}\label{lem-powerprime} Assume that $n=p^m$ for some prime $p$, $\K=\C_n$, and $J=\diag{J_1,\ldots, J_p}$ with $J_i\in\D_{n/p}(\pm 1)$ for $i=1,\ldots, p$.  Then $J\in\J_\K^+$ if and only
if $J_1\ldots J_p=I$, or, equivalently, $J_p=J_1\ldots J_{p-1}$.
\end{lem}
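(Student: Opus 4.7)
The plan is to apply the preceding lemma. Since $n=p^m$, the group $\C_n$ has a unique subgroup of prime order, namely $\langle G\rangle$ with $G:=C_n^{n/p}$ of order $p$; hence $J\in\J_\K^+$ if and only if $\avg_G(J)=I$, and all that remains is to compute this average in terms of the blocks $J_1,\ldots,J_p$.

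To carry this out, I would use the tensor identification $\CC^n=\CC^p\otimes \CC^{n/p}$, under which $G=C_p\otimes I_{n/p}$ and $J=\sum_{k=1}^p E_{k,k}\otimes J_k$, where $E_{k,k}\in\M_p(\CC)$ denotes the rank-one diagonal projection onto $\CC e_k$. A direct computation shows $C_p^{-1}E_{k,k}C_p=E_{k-1,k-1}$, so conjugation by $G$ yields $J^G=\diag{J_2,J_3,\ldots,J_p,J_1}$, and more generally $J^{G^k}$ is the cyclic shift of $J$ by $k$ block positions. Consequently, the $\ell$-th diagonal block of $\avg_G(J)=J\cdot J^G\cdots J^{G^{p-1}}$ equals the product $J_\ell J_{\ell+1}\cdots J_{\ell+p-1}$, with subscripts read mod $p$. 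Since the $J_j$ are signed diagonal matrices and therefore pairwise commute, this product collapses to $J_1J_2\cdots J_p$ independently of $\ell$. Thus $\avg_G(J)=I_p\otimes(J_1J_2\cdots J_p)$, and this equals $I$ precisely when $J_1J_2\cdots J_p=I_{n/p}$.

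For the alternative formulation, observe that each $J_i\in\D_{n/p}(\pm 1)$ satisfies $J_i^2=I$ and that the $J_i$'s commute, so $(J_1\cdots J_{p-1})^{-1}=J_1\cdots J_{p-1}$; therefore $J_1\cdots J_p=I$ is equivalent to $J_p=J_1\cdots J_{p-1}$.

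The main (and really the only) step requiring care is the block computation of $\avg_G(J)$, in particular verifying that conjugation by $C_n^{n/p}$ acts on block indices as the cycle $C_p$ on the first tensor factor; once this is in place, commutativity of diagonal matrices finishes the job.
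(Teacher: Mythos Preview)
Your proof is correct and follows essentially the same route as the paper: invoke the preceding lemma to reduce to the single prime-order generator $G=C_n^{n/p}=C_p\otimes I_{n/p}$, and then compute $\avg_G(J)=I_p\otimes(J_1\cdots J_p)$. You simply spell out in detail the block computation and the justification of the equivalent formulation that the paper leaves to the reader.
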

\begin{proof}
Let $G=C_n^{n/p}$.  Then $G=C_p\otimes I_{n/p}$ generates the only subgroup of $\K$ of prime order and hence $J\in\J_\K^+$ if and only if $\avg_{G}(J)=I$.  Now note that
$\avg_{G}(J)=I_p\otimes (J_1\ldots J_p)$.
\end{proof}

\begin{prop}\label{prop-decomp} Assume that $n=p^m n'$, for prime $p$ and $n'$ coprime to $p$, and $\K=\C_{p^m}\otimes \K_0$ for some indecomposable abelian group $\K_0\subseteq\M_{n'}(\mathbb{C})$.  Let $J=\diag{J_1,\ldots, J_p}$ with $J_i=\diag{J_i^{(1)},\ldots, J_i^{(p^{m-1})}}$ with $J_i^{(j)}\in \D_{n'}(\pm 1)$ for $i=1,\ldots, p$ and $j=1,\ldots, p^{m-1}$.  Then $J\in \J_\K^+$ if and only if $J_1\ldots J_p=I$ and for all $i,j$ we have that $J_i^{(j)}\in\J_{\K_0}^+$.
\end{prop}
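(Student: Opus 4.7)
The plan is to invoke the lemma immediately preceding Lemma~\ref{lem-powerprime} to reduce $J\in\J_\K^+$ to a finite list of averaging identities, one for each prime-order subgroup of $\K$. Since $\K\cong \C_{p^m}\times \K_0$ as an abstract abelian group with $|\C_{p^m}|=p^m$ and $|\K_0|=n'$ coprime to $p$, the Sylow decomposition of $\K$ matches its tensor decomposition: the Sylow $p$-subgroup is $\C_{p^m}\otimes I_{n'}$ (cyclic, with a unique subgroup of order $p$), and all prime-order subgroups of $\K$ of order different from $p$ lie inside $I_{p^m}\otimes \K_0$. It therefore suffices to translate $\avg_{G_0}(J)=I$ for $G_0:=C_{p^m}^{p^{m-1}}\otimes I_{n'}=C_p\otimes I_{p^{m-1}}\otimes I_{n'}$, and $\avg_{I_{p^m}\otimes H}(J)=I$ for $H$ ranging over generators of prime-order subgroups of $\K_0$, into the two conditions in the statement.

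The first average is essentially the content of Lemma~\ref{lem-powerprime}: conjugation by $G_0$ cyclically permutes the diagonal blocks $J_1,\ldots,J_p$, and since these blocks are diagonal (hence pairwise commuting) the averaging collapses to $\avg_{G_0}(J)=I_p\otimes(J_1 J_2\cdots J_p)$. So $\avg_{G_0}(J)=I$ if and only if $J_1 J_2\cdots J_p = I$, which is the first asserted condition.

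For the second family I would write
\[
J = \sum_{i=1}^{p}\sum_{j=1}^{p^{m-1}} E_{ii}\otimes E_{jj}\otimes J_i^{(j)},
\]
where $E_{ii}$ denotes the rank-one diagonal idempotent at position $i$. Taking $G=I_p\otimes I_{p^{m-1}}\otimes H$, conjugation by $G$ acts trivially on the first two tensor factors and conjugates each $J_i^{(j)}$ by $H$, yielding
\[
\avg_G(J) = \sum_{i,j} E_{ii}\otimes E_{jj}\otimes \avg_H(J_i^{(j)}).
\]
Hence $\avg_G(J)=I$ if and only if $\avg_H(J_i^{(j)})=I_{n'}$ for every $i,j$. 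Letting $H$ run over generators of all prime-order subgroups of $\K_0$ and reapplying the preceding lemma (this time to $\K_0$) converts these conditions into $J_i^{(j)}\in\J_{\K_0}^+$ for every $i,j$. Combined with $J_1\cdots J_p=I$, this yields the proposition. The only non-computational step is the Sylow identification in the first paragraph, and since $\K$ is abelian of order $p^m n'$ with $\gcd(p,n')=1$ it is immediate, so I do not expect any real obstacle.
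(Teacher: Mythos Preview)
Your proposal is correct and follows essentially the same route as the paper: both invoke the preceding lemma to reduce membership in $\J_\K^+$ to averaging identities over generators of prime-order subgroups, then use the coprimality of $p$ and $n'$ to see that these generators are exactly $G_1=C_p\otimes I_{p^{m-1}}\otimes I_{n'}$ together with $I_{p^m}\otimes G_j$ for $G_j$ running over prime-order generators of $\K_0$, and finally compute the two families of averages blockwise. Your tensor-idempotent bookkeeping and the explicit reapplication of the lemma to $\K_0$ spell out in more detail exactly what the paper's proof leaves implicit in its last sentence.
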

\begin{proof}  Let $G_1=C_{p}\otimes I_{p^{m-1}}\otimes I_{n'}$ and let $G_2,\ldots, G_k$ be generators of all subgroups of prime order of $\K_0$.  Then (since $p$ does not divide $n'$, the order of $\K_0$ by Lemma \ref{transitive-commutative}) we have that $G_1, I_{p^m}\otimes G_2, \ldots, I_{p^m}\otimes G_k$ are generators of all subgroups of $\K$ of prime order.  Now the conclusion follows by noting that $\avg_{G_1}(J)=I_{p}\otimes (J_1\ldots J_p)$ and that for $j=2,\ldots, k$ we have
$\avg_{I_{p^m}\otimes G_j}(J)=\diag{\tilde{J}_1,\ldots, \tilde{J}_p}$ where $\tilde{J}_i=
\diag{\avg_{G_j}(J_i^{(1)}),\ldots, \avg_{G_j}(J_i^{(p^{m-1})})}$ for $i=1,\ldots, p$.
\end{proof}

\begin{cor} We have that $|\J_n^+|=2^{\varphi(n)}$, where $\varphi$ is the Euler's totient function. 
\end{cor}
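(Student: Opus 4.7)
The plan is to induct on the number of distinct prime divisors of $n$, using Proposition \ref{prop-decomp} to split off one prime at a time. Since $|\J_\K^+|$ is preserved under monomial similarity (conjugation by a monomial matrix $M$ sends $\D_n(\pm 1)$ bijectively to itself and intertwines each $\avg_G$), and since for $n=p^m n'$ with $\gcd(p,n')=1$ the Chinese Remainder Theorem (equivalently, Lemma \ref{transitive-commutative}) gives $\C_n\simeq\C_{p^m}\otimes\C_{n'}$ up to monomial similarity, Proposition \ref{prop-decomp} is applicable with $\K_0=\C_{n'}$.

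For the base case $n=p^m$ a prime power, Lemma \ref{lem-powerprime} alone characterizes $\J_{p^m}^+$: writing $J=\diag{J_1,\ldots,J_p}$, the condition is $J_p=J_1\cdots J_{p-1}$, with $J_1,\ldots,J_{p-1}$ ranging freely over $\D_{p^{m-1}}(\pm 1)$. This gives
$$
|\J_{p^m}^+|=\bigl(2^{p^{m-1}}\bigr)^{p-1}=2^{p^{m-1}(p-1)}=2^{\varphi(p^m)}.
$$

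For the inductive step with $n=p^m n'$, $n'>1$ coprime to $p$, Proposition \ref{prop-decomp} says that $J\in\J_n^+$ iff $J_1\cdots J_p=I$ and each block $J_i^{(j)}$ lies in $\J_{n'}^+$. The critical observation is that $\J_{n'}^+$ is itself a subgroup of $\D_{n'}(\pm 1)$: because $\D_{n'}(\CC)$ is commutative, each $\avg_G$ is a group homomorphism, so $\avg_G(JJ')=\avg_G(J)\avg_G(J')$, and the common kernel $\{J:\avg_G(J)=I \text{ for all } G\}$ is closed under products. Consequently the constraint $J_p^{(j)}=J_1^{(j)}\cdots J_{p-1}^{(j)}$ keeps $J_p^{(j)}$ in $\J_{n'}^+$ automatically once the preceding blocks are, so the $(p-1)p^{m-1}$ blocks $J_i^{(j)}$ with $i<p$ may be chosen independently. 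By the inductive hypothesis,
$$
|\J_n^+|=|\J_{n'}^+|^{(p-1)p^{m-1}}=2^{\varphi(n')(p-1)p^{m-1}}=2^{\varphi(n')\varphi(p^m)}=2^{\varphi(n)},
$$
invoking multiplicativity of $\varphi$. The only mildly non-routine step is justifying the decomposition of $\C_n$ and the similarity-invariance of $|\J_\K^+|$, both of which are essentially bookkeeping; everything else is the combinatorics just described.
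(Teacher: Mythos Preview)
Your proof is correct and follows essentially the same route as the paper's: induct on the number of distinct prime divisors of $n$, handling the prime-power base case via Lemma~\ref{lem-powerprime} and splitting off one prime power at a time via Proposition~\ref{prop-decomp}. You make explicit two points the paper leaves tacit---that $\J_{n'}^+$ is a subgroup (so the constrained block $J_p$ automatically lies in $\J_{n'}^+$) and that $|\J_\K^+|$ is monomial-similarity invariant---but the overall architecture is identical.
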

\begin{proof}  If $n$ is a power of a prime then the result follows from Lemma \ref{lem-powerprime}.  Now assume that $n$ is not a power of a prime.  Let $n=p_1^{m_1}\ldots p_k^{m_k}$ be the decomposition of $n$ into the product of pairwise distinct primes $p_1,\ldots, p_k$.  Then, up to permutational similarity, we have that $\C_{p_1^{m_1}}\otimes \K_0$, where $\K_0=\C_{p_2^{m_2}}\otimes\ldots\otimes\C_{p_k^{m_k}}$.  Now Proposition \ref{prop-decomp} gives that $\displaystyle |\J_\K^+|={|\J_{\K_0}^+|}^{p_1^{m_1}-p_1^{m_1-1}}$.  

The claim  $\displaystyle |\J_n^+|={2}^{\varphi(n)}$ now follows by induction on $k$.
\end{proof}

\begin{cor}\label{prop-Jn}
Let $n$ be odd.  Then the groups of signed diagonal matrices $\J_n^+$  and $\J_n$ 
are not scalar.
\end{cor}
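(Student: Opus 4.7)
The plan is to deduce this from the cardinality formula $|\J_n^+|=2^{\varphi(n)}$ established in the preceding corollary, combined with a quick check of which scalar matrices survive the determinant-one condition.

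First I would observe that the only scalar matrices in $\D_n(\pm 1)$ are $I_n$ and $-I_n$. Since $n$ is odd, $\det(-I_n)=(-1)^n=-1\neq 1$, so $-I_n\notin\J_n^+$. Consequently, $I_n$ is the \emph{only} scalar matrix that $\J_n^+$ can contain, and it suffices to show that $\J_n^+$ has strictly more than one element.

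For this I would invoke the preceding corollary: $|\J_n^+|=2^{\varphi(n)}$. For the odd $n>1$ that is fixed throughout this subsection, one has $\varphi(n)\geq 2$, so $|\J_n^+|\geq 4$. Thus $\J_n^+$ contains at least three non-identity elements, all of which are necessarily non-scalar, proving that $\J_n^+$ is not a scalar group. Since $\J_n\supseteq\J_n^+$ via the decomposition $\J_n=\J_n^+\cup(-\J_n^+)$ recorded just before Lemma \ref{lem-powerprime}, the group $\J_n$ is non-scalar as well.

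There is no real obstacle here; the hard work has already been done by the cardinality computation, which itself rests on Lemma \ref{lem-powerprime} and Proposition \ref{prop-decomp}. The only subtlety is the parity check that eliminates $-I_n$ from $\J_n^+$ via the determinant-one requirement, without which the argument would fail (indeed for even $n$ the matrix $-I_n$ is a scalar element of $\J_n^+$, and $\J_n^+$ can reduce to $\{\pm I_n\}$).
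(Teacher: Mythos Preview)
Your proposal is correct and matches the paper's approach exactly: the paper marks this corollary with a bare \qed, treating it as an immediate consequence of the cardinality formula $|\J_n^+|=2^{\varphi(n)}$ just established. Your write-up simply makes explicit the two observations the paper leaves implicit, namely that $-I_n\notin\J_n^+$ when $n$ is odd and that $\varphi(n)\ge 2$ for odd $n>1$.
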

\qed

\begin{lem}\label{lem-cyc} If $\K$ is not cyclic, then $\J_\K=\{\pm I\}$.
\end{lem}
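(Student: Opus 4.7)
My plan is to show that non-cyclicity of $\K$ forces $\K$ to contain a subgroup isomorphic to $\C_p \times \C_p$ for some odd prime $p$, and then to kill all non-trivial elements of $\J_\K^+$ one $\K'$-orbit at a time via a single identity in the group algebra $\mathbb{F}_2[\C_p \times \C_p]$.

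By Lemma \ref{transitive-commutative} I may assume, up to monomial similarity, that $\K = \C_{n_1} \otimes \cdots \otimes \C_{n_k}$. As an abstract abelian group $\prod \mathbb{Z}/n_i$ is cyclic iff the $n_i$ are pairwise coprime, so non-cyclicity of $\K$ produces a prime $p$ (odd, because $n$ is) with $p \mid n_1$ and $p \mid n_2$ after reordering. Then $G_1 = C_{n_1}^{n_1/p} \otimes I \otimes \cdots \otimes I$ and $G_2 = I \otimes C_{n_2}^{n_2/p} \otimes I \otimes \cdots \otimes I$ lie in $\K$ and generate a subgroup $\K' \le \K$ with $\K' \cong \C_p \times \C_p$.

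Given $J \in \J_\K^+$, I would write its diagonal entries as $j_x \in \{\pm 1\}$ for $x$ in the index set $X = \{1, \ldots, n\}$ (on which $\K$ acts freely and transitively as permutations), and encode them by $\epsilon(x) \in \mathbb{F}_2$ via $j_x = (-1)^{\epsilon(x)}$. The condition $\avg_G(J) = I$ then says that the sum of $\epsilon$ over each $\langle G \rangle$-orbit is $0$ in $\mathbb{F}_2$. Fixing a $\K'$-orbit $O \subseteq X$ (necessarily of size $p^2$) and identifying $O$ with $\K'$ via a basepoint, $\epsilon|_O$ becomes an element of the group algebra $\mathbb{F}_2[\K']$, and for each of the $p+1$ order-$p$ subgroups $H \le \K'$ the condition reduces to $\epsilon \cdot e_H = 0$, where $e_H := \sum_{h \in H} h$.

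The main computation is the identity
\[
\sum_{H \le \K',\, |H| = p} e_H \;=\; 1 + \sum_{g \in \K'} g \qquad \text{in } \mathbb{F}_2[\K'],
\]
which holds because each non-identity element of $\K'$ lies in exactly one order-$p$ subgroup while $1$ lies in all $p+1$ of them, and $p+1 \equiv 1 \pmod 2$. Summing the relations $\epsilon \cdot e_H = 0$ thus yields $\epsilon \cdot \bigl(1 + \sum_g g\bigr) = 0$, which forces $\epsilon$ to be constant on $O$; the value $\epsilon \equiv 1$ is then excluded by $\bigl(\sum_g g\bigr) \cdot e_H = p \sum_g g = \sum_g g \neq 0$ in $\mathbb{F}_2[\K']$. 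Hence $\epsilon \equiv 0$ on every $\K'$-orbit, so $J = I$, giving $\J_\K^+ = \{I\}$ and $\J_\K = \J_\K^+ \cup (-\J_\K^+) = \{\pm I\}$. The only non-routine step is noticing the group-algebra identity above that collapses all $p+1$ averaging conditions into a single equation; the rest is bookkeeping.
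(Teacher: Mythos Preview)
Your argument is correct and rather elegant, modulo one arithmetic slip: since $p$ divides the odd integer $n$, the prime $p$ is odd, so $p+1 \equiv 0 \pmod 2$, not $1$. Fortunately your identity $\sum_H e_H = 1 + \sum_{g \in \K'} g$ in $\mathbb{F}_2[\K']$ survives this: the coefficient of the identity element on the left is $p+1 \equiv 0$, while on the right it is $1+1 = 0$, and every non-identity element appears with coefficient $1$ on both sides. So the proof goes through unchanged once the justification is corrected.

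Your route differs genuinely from the paper's. Both arguments first locate a copy of $C_p \times C_p$ inside $\K$ and then exploit the $p+1$ averaging conditions coming from its order-$p$ subgroups. The paper proceeds by explicit block computation: writing $J = \diag{J_0, \ldots, J_{p-1}}$ and using generators $A, B$ with $B = I_p \otimes P$, it extracts $\avg_P(J_k) = I$ from $\avg_B(J) = I$ and $\prod_j J_{j+k}^{P^{ij}} = I$ from each $\avg_{AB^i}(J) = I$, then multiplies the latter over all $i$ and collapses the product to $J_k^p = J_k = I$ using the former. Your group-algebra identity packages the same $p+1$ conditions into a single equation in $\mathbb{F}_2[\K']$, trading explicit matrix manipulation for a structural observation. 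The paper's version is more hands-on and self-contained; yours is shorter, coordinate-free on each $\K'$-orbit, and makes transparent that only the subgroup lattice of $C_p \times C_p$ (and the oddness of $p$) is being used.
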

\begin{proof} Assume that $\K$ is not cyclic.  Up to monomial similarity we can assume that $$\K=\C_{n_1}\otimes \ldots\otimes\C_{n_k}$$ 
with $n_1$ and $n_2$ having a common prime factor $p$.  Let $A=C_{n_1}^{n_1/p}\otimes I_{n_2}\otimes\ldots\otimes I_{n_k} = C_p\otimes I_{n_1/p}\otimes I_{n_2}\otimes\ldots\otimes I_{n_k}=C_p\otimes I_{n/p}$ and let $B=I_{n_1}\otimes C^{n_2/p}\otimes I_{n_3}\otimes\ldots\otimes I_{n_k}$.  Also let $P=I_{n_1/p}\otimes C^{n_2/p}\otimes I_{n_3}\otimes\ldots\otimes I_{n_k}$ so that $P$ is of order $p$ and $B=I_p\otimes P$.  Let $J=\diag{J_0,\ldots, J_{p-1}}\in \J_\K^+$ with $J_0,\ldots, J_{p-1}\in \D_{n/p}(\pm 1)$.  Observe that $J^A=\diag{J_1,\ldots, J_{p-1},J_0}$ and
$J^B=\diag{J_0^P,\ldots, J_{p-1}^P}$.  Now $I=\avg_{B}(J)=\diag{\avg_P(J_0),\ldots, \avg_{P}(J_{p-1})}$, so for all $i=0,\ldots, p-1$ we have $\avg_P(J_i)=I$.  Fix $i=0,\ldots, p-1$. Comparing $(k+1,k+1)$-diagonal blocks of the equality $I_n=\avg_{AB^i}(J)$ yields that $I_{n/p}=\prod_{j=0}^{p-1} J_{j+k}^{P^{ij}}$. Hence we have that for all $k$ (in the computation we use the convention that indices are taken modulo $p$, i.e., for $p\le \ell\le 2p-1$ we have $J_\ell = J_{\ell-p}$):
\begin{eqnarray*}
I_{n/p}&=& \prod_{i=0}^{p-1} I_{n/p} = \prod_{i=0}^{p-1}\prod_{j=0}^{p-1} J_{j+k}^{P^{ij}}\\
&=& \prod_{i=0}^{p-1}\left(J_k\prod_{j=1}^{p-1} J_{j+k}^{P^{ij}}\right) = J_k^p \prod_{i=0}^{p-1}\prod_{j=1}^{p-1} J_{j+k}^{P^{ij}} \\
&=& J_k^p \prod_{j=1}^{p-1}\prod_{i=0}^{p-1}  J_{j+k}^{P^{ij}} 
= J_k^p \prod_{j=1}^{p-1} I_{n/p}\\ &=& J_k^p = J_k.
\end{eqnarray*}
Hence $J=I$ and we can conclude that $\J_\K^+=\{I\}$ and $\J_\K=\{\pm I\}$.
\end{proof}

\section{Groups whose commutator subgroups consist of involutions}

The main purpose of the paper is to study irreducible semigroups in which ring commutators have real spectra.  The structure of unitary groups with this property is an important ingredient.  However, for groups, it is perhaps more natural, to study group commutators.
In this section we briefly explore the structure of compact groups in which every element of the derived subgroup has real spectrum (or, equivalently, is an involution).  The later sections of the paper will not depend on the discussion that follows. 

We start by the following well-known observation.

\begin{prop} Let $\G\subseteq\Mn$ be an irreducible group.  If its commutator subgroup $[\G,\G]$ is diagonalizable, then up to simultaneous similarity, $\G$ is monomial with commutative pattern.
\end{prop}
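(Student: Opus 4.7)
The plan is to work with the normal abelian subgroup $\D=[\G,\G]$ and reduce to a smaller instance of the same statement on each block of a resulting Clifford-type decomposition.

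First, I would bring $\D$ to diagonal form: since $\D$ is simultaneously diagonalizable and abelian, a simultaneous similarity puts every element of $\D$ in diagonal form. Decompose $\mathbb{C}^n=W_1\oplus\cdots\oplus W_s$ into the joint eigenspaces of $\D$, each of which is a standard subspace. Because $\D$ is normal in $\G$, conjugation by any $G\in\G$ permutes the characters of $\D$ and hence permutes the $W_i$; this yields a homomorphism $\pi\colon\G\to S_s$. The crucial observation is that $\D\subseteq\ker\pi$ (each $W_i$ is $\D$-invariant), so $\pi$ factors through the abelian quotient $\G/[\G,\G]$, and therefore $\pi(\G)$ is abelian. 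In particular $\G$ is block-monomial with respect to the $W_i$, with abelian block-pattern.

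If every $W_i$ has dimension $1$, then $\G$ is already monomial in the standard basis with commutative pattern $\pi(\G)$, and we are done. Otherwise I would apply Proposition \ref{prop-block} (via a block-diagonal similarity which preserves each $W_i$ and hence keeps $\D$ diagonal) to arrange that all $W_i=\mathbb{C}^{n/s}$ and that the nonzero blocks of elements of $\G$ form a fixed irreducible group $\H\subseteq\M_{n/s}(\mathbb{C})$. Since every commutator of $\G$ belongs to $\D$ and acts on $W_1$ as a scalar, we have $[\H,\H]\subseteq\Cx I_{n/s}$, which is trivially diagonalizable. Then I would invoke the inductive hypothesis on $\H$ (noting $n/s<n$) to obtain a similarity monomializing $\H$ with commutative pattern; applying this similarity simultaneously on each block conjugates $\G$ to monomial form while keeping $\D$ diagonal. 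Commutativity of the full pattern $\Pat(\G)$ is then automatic, because the group of diagonal elements of the monomialized $\G$ contains $[\G,\G]$, so $\Pat(\G)$ is a quotient of the abelian group $\G/[\G,\G]$.

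The main obstacle is the base case $s=1$, where $[\G,\G]\subseteq\Cx I_n$ is already scalar and the block decomposition carries no information. There $\G/Z(\G)$ is abelian, and irreducibility of $\G$ forces the induced commutator pairing $\G/Z(\G)\times\G/Z(\G)\to\Cx$ to be nondegenerate. Following the standard analysis for such Heisenberg-type representations, one would select a Lagrangian subgroup of $\G/Z(\G)$, lift it to a maximal abelian $\tilde L\subseteq\G$ which is simultaneously diagonalizable, and use the characters of $\tilde L$ to exhibit a basis in which $\G$ is monomial with abelian pattern; this step is where additional regularity on $\G$ (implicit in the section's setting) is really used.
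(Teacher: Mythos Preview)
Your Clifford reduction (weight spaces of $\D=[\G,\G]$, block-monomial structure, pattern factoring through the abelian quotient $\G/[\G,\G]$) is correct and matches the paper's argument. The gap is the case $s=1$, where $[\G,\G]$ is scalar and the block decomposition is trivial. You sketch a Lagrangian argument and then claim this ``is where additional regularity on $\G$ (implicit in the section's setting) is really used.'' That is incorrect: the proposition is stated for arbitrary irreducible $\G\subseteq\Mn$, with no compactness or finiteness assumption, and it holds as stated.

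The paper handles the scalar-commutator case in one line by invoking Suprunenko's theorem: if $[\G,\G]$ is central then $\G$ is nilpotent, and every irreducible nilpotent subgroup of $\Mn$ is monomializable. Your Heisenberg approach can also be completed without extra hypotheses, but two nontrivial facts are missing from your sketch. First, every $g\in\G$ is automatically diagonalizable: from $gh=\lambda_h hg$ (with $\lambda_h$ scalar) one sees that each $h\in\G$ maps the $\alpha$-eigenspace of $g$ to the $\lambda_h\alpha$-eigenspace, so the sum of genuine eigenspaces of $g$ is $\G$-invariant and hence all of $\mathbb{C}^n$. Second, $\G/Z(\G)$ is finite: taking determinants in $gh=\lambda_h hg$ gives $\lambda_h^n=1$, and conjugation by $h$ is then determined by the finitely many roots of unity $[h,g_i]$ on a basis $\{g_i\}\subseteq\G$ of $\Mn$. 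Only with these in hand does the maximal-abelian-subgroup argument produce one-dimensional weight spaces. As written, your $s=1$ case is incomplete, and since your inductive step always reduces to an $\H$ with $[\H,\H]$ scalar, the induction never escapes this case.
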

\begin{proof}
This is a straightforward corollary of the famous theorem of Suprunenko \cite{S} which states that every irreducible nilpotent group is monomializable.  Indeed, if the commutator subgroup $[\G,\G]$ is scalar, then $\G$ is nilpotent and hence monomializable.  If $[\G,\G]$ is not scalar, then we invoke Clifford's theorem to block-monomialize $\G$.   Now observe that the blocks are individually equal to a fixed irreducible nilpotent group (which can be monomialized by using Suprunenko's Theorem again).
\end{proof}

We abbreviate $\mathbb{C}^\times=\mathbb{C}\backslash\{0\}$. 

\begin{prop}\label{prop-commutatorsubgroup} The commutator subgroup of $\G\subseteq\Mn$ consists of involutions if and only if, up to simultaneous similarity, $\G$ is contained in $\mathbb{C}^\times \H$, where $\H$ is a signed permutation group with commutative pattern. 
\end{prop}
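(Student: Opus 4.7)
The plan is, in the backward direction, essentially a two-line verification: writing $G_1=c_1H_1$ and $G_2=c_2H_2$ with $H_i\in\H$, the scalars cancel in $[G_1,G_2]=[H_1,H_2]$, the commutative-pattern hypothesis makes this commutator diagonal, and because $\H$ is signed its diagonal entries lie in $\{\pm 1\}$, giving an involution.

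In the forward direction, I would first observe that any two involutions whose product is again an involution must commute, so $[\G,\G]$ is automatically abelian and hence simultaneously diagonalizable. The previous proposition then places $\G$, up to a simultaneous similarity, in monomial form with commutative pattern; I write each $G\in\G$ in this form as $A_GP_G$ with $A_G=\diag{a_k^{(G)}}$ and $\sigma_G=\Pat(G)$, and assume $\G$ is irreducible so that $\Pat(\G)$ acts transitively on $\{1,\ldots,n\}$. A direct computation shows $[G,D]=\diag{d_{\sigma_G^{-1}(k)}/d_k}$ for $D=\diag{d_k}\in\G\cap\D_n(\CC)$; the involution requirement forces $d_k^2=d_{\sigma_G^{-1}(k)}^2$ for every $G$, and transitivity then makes $D^2$ scalar, so $\G\cap\D_n(\CC)\subseteq\CC^\times\D_n(\pm 1)$.

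The main step will be to show that, after a further diagonal similarity, every $G\in\G$ has all weights in $\{\pm\lambda_G\}$ for some $\lambda_G\in\CC^\times$; the desired group $\H$ is then generated by the rescaled elements $\lambda_G^{-1}G$, with pattern $\Pat(\G)$, commutative. The general commutator $[G,H]$ for $G=AP,\,H=BQ$ with commuting patterns $\sigma,\tau$ has diagonal entries $(a_k/a_{\tau^{-1}(k)})(b_{\sigma^{-1}(k)}/b_k)$, and requiring each such entry to square to $1$ is exactly a $1$-cocycle relation for the assignment $\sigma\mapsto[A_{G_\sigma}]$ in the $\Pat(\G)$-module of $\CC^\times/\{\pm 1\}$-valued functions on $\{1,\ldots,n\}$ modulo constants; diagonal similarities act on this assignment by coboundaries. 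I would finish by induction on $n$ via Clifford's theorem: if $[\G,\G]\subseteq\{\pm I\}$ then $\G$ is projectively abelian and the $\{\pm 1\}$-valued symplectic form on $\G/Z(\G)$ places $\G$, after a similarity, inside a tensor product of copies of the two-dimensional signed permutation group $\langle C_2,\diag{1,-1}\rangle$, which plainly has commutative pattern; otherwise Clifford splits $\CC^n$ into isotypic blocks on which $\G$ acts block-monomially with common block group $\H_0\subseteq\M_{n/r}(\CC)$ inheriting the same hypothesis, and the induction hypothesis applied to $\H_0$ plus the Schur-lemma rigidity on each block assembles the global realization. The hardest part is this last step: a purely diagonal similarity only kills the cocycle modulo a potentially nontrivial class in $H^2(\Pat(\G),\CC^\times/\{\pm 1\})$, and the Clifford-based induction is what actually trivializes this class.
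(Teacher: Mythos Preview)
Your backward direction is the same two-line check the paper gives. In the forward direction your first moves also match the paper exactly: use that involutions whose product is an involution commute, invoke the preceding proposition to monomialize $\G$ with commutative (and, by irreducibility, transitive) pattern, and then conjugate a diagonal $D\in\G$ around by the transitive pattern to force $D\in\CC^\times\D_n(\pm 1)$. The paper's written argument literally stops there --- it treats only diagonal $D$ and says nothing explicit about why a general monomial $G\in\G$ must, up to a further similarity, have all weights in $\{\pm\lambda_G\}$. You are right to flag this as the remaining step.

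Where your proposal departs from the paper is in how that step is handled, and your route is substantially heavier than needed. A direct completion is the elementary splitting argument the paper records just after this proposition as Lemma~\ref{lem-split} (take $Y=\CC^\times$, $X=\{\pm1\}$): for each cyclic generator $a_i$ of the abelian pattern group pick a lift $G_i\in\G$ and multiply it by a scalar so that $G_i^{\,\mathrm{ord}(a_i)}\in\D_n(\pm1)$, which is possible because $\CC^\times$ is divisible; since all group commutators of these lifts already lie in $[\G,\G]\subseteq\D_n(\pm1)$, the resulting section is a homomorphism modulo $\D_n(\pm1)$, and a single rescaling of the standard basis along the transitive orbit finishes. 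The $H^2$ obstruction you worry about is precisely the failure of commutators of the chosen lifts to land in $\D_n(\pm1)$ rather than merely in $\CC^\times\D_n(\pm1)$, and that failure cannot occur here by hypothesis; so no Clifford induction is needed. Your inductive sketch is in any case incomplete as written: even granting the projectively-abelian base case, ``Schur-lemma rigidity on each block assembles the global realization'' hides a second induction on the block-permutation level and does not by itself produce the required global similarity.
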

\begin{proof} $(\implies):$ 
With no loss we assume that $\G$ is a monomial group.  We can, and do, additionally assume that the commutator subgroup is diagonal.  This immediately yields that the pattern of $\G$ is commutative.  Due to irreducibility of $\G$ we conclude that $\Pat(\G)$ is transitive.  Suppose that $D=\diag{\alpha,\ldots,\beta,\ldots}$ belongs to $\G$.  Due to the transitivity of $\Pat(\G)$  we get that some matrix $D_1$ of the form $D_1=\diag{\alpha\beta^{-1},\ldots}$ belongs to $[\G,\G]$ and hence $\beta=\pm \alpha$ (if $\beta$ is in position $i$ and $G\in\G$ has pattern that maps $e_i$ to $e_1$, then $GDG^{-1}D^{-1}$ has the desired form).  

$(\Longleftarrow):$ Since the pattern is assumed to be commutative, we have that $[\G,\G]$ is a subset of signed diagonal matrices.
\end{proof}

Note that if $\G$ is compact, then its commutator subgroup consists of involutions if and only if the spectrum of every element of the commutator subgroup is real. 

\begin{question} Can we reach the conclusion of Proposition \ref{prop-commutatorsubgroup} above with the (at least apriori) weaker assumption that all group commutators in $\G$ are involutions?
\end{question}

Another natural question that arises from considerations above is the following.
\begin{question} When is the pattern of a monomial group $\G$, up to simultaneous (monomial) similarity, a subgroup of $\G$?
\end{question}

The following technical lemma partially addresses this question.   Recall that a group is $n$-divisible if every element is an $n$-th power.

\begin{lemma}  \label{lem-split}
Let $\G\subseteq\M_n(\mathbb{C})$ be an indecomposable  monomial group of matrices with commutative pattern and let $\D$ be the subgroup of diagonal matrices in $\G$.  If $X$ and $Y$ are subgroups of the multiplicative group of complex numbers $\Cx$ such that $\D\subseteq
Y\D_X$ (here $\D_X$ is the group of diagonal matrices in $\G$ with entries from $X$),  and $Y$ is $n$-divisible, then, up to a diagonal similarity, $\G = Y\G_X$, where $\G_X$ is a group of matrices in $\G$ with nonzero entries from $X$.  Furthermore, if the order of $X$ is coprime to $n$, then, up to a diagonal similarity, the pattern group of $\G$ is a subgroup of $\G$.
\end{lemma}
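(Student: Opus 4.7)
The plan is to show, after a diagonal similarity, that every pattern $P\in\Pat(\G)$ admits a lift in $\G$ with entries in $X$. Once this is established, the subset $\G_X\subseteq\G$ of matrices with entries in $X$ is a subgroup: it is closed under multiplication because $X$ is a subgroup of $\Cx$ and the pattern map is multiplicative, and it surjects onto $\Pat(\G)$. Combined with $\D\subseteq Y\D_X\subseteq Y\G_X$, every $G\in\G$ decomposes as $G=D\cdot H$ with $D\in\D$ and $H\in\G_X$; writing $D=yE$ with $y\in Y$, $E\in\D_X\subseteq\G_X$, yields $G=y(EH)\in Y\G_X$.

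\textbf{Choice of lifts.} Applying Lemma~\ref{transitive-commutative} to the pattern group (itself commutative, indecomposable, monomial, with only $I$ as diagonal element), write $\Pat(\G)\cong\C_{n_1}\otimes\cdots\otimes\C_{n_k}$ with $n=n_1\cdots n_k$. Let $P_i$ generate the $i$-th cyclic factor and pick a lift $L_i\in\G$ with $\Pat(L_i)=P_i$. Since $L_i^{n_i}\in\D\subseteq Y\D_X$, we may write $L_i^{n_i}=y_iE_i$ with $y_i\in Y$, $E_i\in\D_X$. The matrix $L_i^{n_i}$ is constant along each $\langle P_i\rangle$-orbit, and its value on that orbit equals the product of the weights of $L_i$ around the orbit; hence every such orbit product lies in $y_iX$. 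By $n$-divisibility of $Y$ (and $n_i\mid n$), pick $\mu_i\in Y$ with $\mu_i^{n_i}=y_i$, so that the orbit products of the rescaled weights $\mu_i^{-1}w$ of $L_i$ lie in $X$.

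\textbf{The diagonal similarity (key step).} The technical heart of the proof is to produce, via a single diagonal similarity $T$, an element $D_i\in\D$ for each $i$ such that $D_i\cdot TL_iT^{-1}$ has entries in $X$. For each $i$, the condition that the $j$-th weight of the result lie in $X$ translates to a telescoping recursion $t_{\pi_i(j)}=t_j(\mu_i^{-1}w_j)x_j^{-1}$ along each $\langle P_i\rangle$-orbit, where $w_j$ is the $j$-th weight of $L_i$ and $x_j\in X$ is chosen freely; this closes up consistently around the orbit precisely because the orbit product of $\mu_i^{-1}w_j$ already lies in $X$. Exploiting the tensor decomposition of $\Pat(\G)$, I would set $T=T_1\otimes\cdots\otimes T_k$ with $T_j$ solving the telescoping problem in the $j$-th tensor factor; since the generators $P_i$ act on distinct factors, the telescoping conditions for different $i$'s decouple and the same $T$ works for all of them. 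The resulting $H_i:=D_i\cdot TL_iT^{-1}\in\G$ has entries in $X$ and pattern $P_i$, so products of the $H_i$ provide the required lifts of arbitrary $P\in\Pat(\G)$.

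\textbf{``Furthermore'' clause and main obstacle.} For the last assertion, assume $|X|$ and $n$ are coprime. Then $H_i^{n_i}\in\D_X$ has order dividing both $n_i$ and $|X|$, hence equals $I$. Any remaining twist in the weights of $H_i$ lies in a finite cyclic subgroup of $X$ of order dividing $n_i$; by coprimality this subgroup is trivial, so $H_i$ can be taken to be the permutation matrix $P_i$ itself, and then $\Pat(\G)=\langle P_1,\ldots,P_k\rangle\subseteq\G$. The principal obstacle throughout is the third paragraph: producing a single similarity that works simultaneously for all generators and reconciling the scalars from $Y$ with the structure of $\D$ so that each $H_i$ lies genuinely in $\G$, not merely in $\Cx\cdot\G$. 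This reconciliation rests essentially on the tensor decomposition of $\Pat(\G)$ and on the $n$-divisibility of $Y$.
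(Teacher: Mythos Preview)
Your overall plan---decompose the commutative pattern group into cyclic factors, lift the generators, rescale the lifts by roots taken in $Y$, then apply a diagonal similarity---is precisely the paper's. You also correctly locate the crux in your third paragraph. The proposed mechanism there, however, does not work. With $T=T_1\otimes\cdots\otimes T_k$, conjugation changes the weight of $L_i$ at the multi-index $j=(j_1,\dots,j_k)$ by the ratio $(T_i)_{j_i+1}/(T_i)_{j_i}$, which depends only on $j_i$; but the weight $w_j$ of $L_i$ depends on the \emph{full} multi-index, so your recursion $t_{\pi_i(j)}=t_j(\mu_i^{-1}w_j)x_j^{-1}$ cannot be met by $t$'s of tensor-product form unless the $w_j$ already lie in a single $X$-coset along each fibre $j_i=\mathrm{const}$, which is not given. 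The paper produces the common similarity by a different device: with $f(a_1^{i_1}\cdots a_k^{i_k}):=f(a_1)^{i_1}\cdots f(a_k)^{i_k}$ it passes to the basis $\widetilde e_x=f(x)e_1$ indexed by $x\in\Pat(\G)$. Then $f(x)\widetilde e_y=\alpha(x,y)\,\widetilde e_{xy}$ for a cocycle $\alpha$ with values in $\D_X$, so in this basis every $f(x)$ has entries in $X$ and the compatibility among the generators is automatic---no decoupling is needed.

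Your ``furthermore'' argument also has a gap. The assertion that $H_i^{n_i}$ has order dividing $n_i$ is false: with $n=n_i=3$, $X=\{\pm1\}$ and $H_i=\diag{-1,1,1}C_3$ one gets $H_i^3=-I$. Even when $H_i^{n_i}=I$, the individual weights of $H_i$ need not be trivial, and making each $H_i$ equal to $P_i$ requires a further diagonal similarity that must work for all $i$ simultaneously \emph{and} force the $H_i$ to commute. The paper does not attempt to fix the $H_i$ one at a time; instead it invokes the Schur--Zassenhaus theorem on the extension $\D_X\to\G_X\to\Pat(\G)$ (legitimate since $|\D_X|$ divides a power of $|X|$, which is coprime to $n=|\Pat(\G)|$) to obtain an honest complement $\mathcal Q\cong\Pat(\G)$ inside $\G_X$, and then reruns the basis change $\widetilde e_x=f(x)e_1$ with a genuine group homomorphism $f\colon\Pat(\G)\to\mathcal Q$.
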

\begin{proof} Assume with no loss of generality that $Y\subseteq \G$.  Denote the pattern subgroup of $\G$ by $\PP$ and consider the exact sequence $\D\to\G\stackrel{\pi}{\to}\PP$.  Let $\PP=\langle a_1,\ldots, a_k\rangle$, where $a_i$'s are cyclic generators of $\PP$ of order
$n_i$.  Let $g\colon \PP\to\G$ be a splitting of $\pi$.  Now define a new such splitting $f\colon \PP\to\G$ by $f(a_i)=\mu_ig(a_i)$, where 
$\mu_i\in Y$ are such that $\mu_i^n g(a_i)^n\in \D_X$ and by $f(a_1^{i_1}\ldots a_k^{i_k})=f(a_1)^{i_1}\ldots f(a_k)^{i_k}$ and observe that $f$ is a homomorphism modulo $\D_X$; more precisely, we have a map $\alpha\colon \PP\times\PP\to\D_X$ such that
$f(x)f(y)=\alpha(x,y)f(xy)$ for all $x,y\in\PP$.  Now rescale the standard basis $e_x=xe_1$ (as indexed by $\PP$) by setting $\widetilde{e_x}=f(x)e_1$. The computation $f(x)\widetilde{e_y} = f(x)f(y)e_1 = \alpha(x,y) f(xy)e_1 = \alpha_{x,y}\widetilde{e_{xy}}$ shows that using this diagonal similarity we achieve the desired result (a $\G$ is generated by $f(\PP)$ and $\D=Y\D_X$).

Now assume that the order of $X$ is coprime to the order of $\G$.  Then by the Schur-Zassenhaus Theorem $\D_X$ is a complemented subgroup of $\G$.
Let $\mathcal{Q}$ denote such a complement.  Note that we have an exact sequence $(Y
\cap \mathcal{Q}) \to \mathcal{Q}\to \PP$ and that now (in a fashion almost identical to the argument above) we can choose a splitting $f\colon \PP\to (Y\cap\mathcal{Q})$ that is a group homomorphism.  The rescaling of the basis $\widetilde{e_x}=f(x) e_1$ then finishes the proof.
\end{proof}
The following example shows that the $n$-divisibility of $Y$ is crucial:
\begin{example}
Let $\G$ be the subgroup of $3\times 3$ matrices generated by $\xi C_3$ and all diagonal matrices of the form $\diag{\pm 1, \pm 1, \pm 1}$, where $\xi$ is a primitive ninth root of unity.  Then no diagonal similarity can possibly force any element of the form
$\diag{\pm 1, \pm 1, \pm 1}C_3$ to belong to $\G$.  In this case we also have that the order of $X=\{-1, 1\}$ is coprime to $n=3$, and under no diagonal similarity we have that the pattern of $\G$ is a subgroup of $\G$.
\end{example}
The following example shows that if the order of $X$ is not coprime to $n$, then even with the existence of an $n$-divisible $Y$, we may not be able to find a diagonal similarity under which the pattern of $\G$ becomes a subgroup of $\G$.
\begin{example}
Let $\K$ be the set of all $2\times 2$ matrices of the form $\diag{\pm 1, \pm 1}C_2$ and $G$ be the group of $6\times 6$ matrices generated by all nonzero scalars, the matrix $I_2\otimes C_3$, and all block diagonal matrices of the form $M_{A,B,C}=\diag{A, B, C}$ where $A,B,C\in\K$ are such that $\det(ABC)=1$.  Note that the square of no scalar multiple of any $M_{A,B,C}$ is scalar and hence no such matrix can be diagonally similar to its pattern (the square of the pattern of $M_{A,B,C}$ is $I$).
\end{example}

\begin{prop}  Suppose that $\mathcal{G}=\mathbb{C}^\times \mathcal{G}\subseteq\Mn$ is an irreducible group whose commutator subgroup consists of involutions.
If $n$ is odd, then, up to similarity, $\G=\mathbb{C}^\times\PP\ltimes\J$, where $\PP$ is an indecomposable commutative permutation group and $\J=[\G,\G]$ is a $\PP$-stable nonscalar subgroup of signed diagonal matrices. 
\end{prop}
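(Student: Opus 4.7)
The strategy is to normalize $\G$ via Proposition~\ref{prop-commutatorsubgroup} and Lemma~\ref{lem-split} (exploiting oddness of $n$) so that both $\PP$ and a signed diagonal subgroup become actual subgroups of $\G$, then to identify $[\G,\G]$ with the determinant-one signed diagonals via a short $\mathbb{F}_2[\PP]$-module computation based on semisimplicity.

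By Proposition~\ref{prop-commutatorsubgroup}, up to similarity $\G=\Cx\H$ for some signed permutation group $\H$ with commutative pattern $\PP=\Pat(\H)$. Replacing $\H$ by $\{\pm I\}\H$ if necessary (still a signed permutation group with the same pattern), arrange $-I\in\H$. Irreducibility of $\G$ forces $\PP$ to act transitively on $\{e_1,\ldots,e_n\}$, so $\PP$ is an indecomposable commutative permutation group with $|\PP|=n$. Apply Lemma~\ref{lem-split} with $Y=\Cx$ (divisible, hence $n$-divisible) and $X=\{\pm 1\}$ (order $2$, coprime to odd $n$): after a further diagonal similarity, $\PP\subseteq\G$ and $\H=\PP\ltimes\D_0$, where $\D_0:=\H\cap\D_n(\pm 1)$.

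The core step is the identification $\J:=[\G,\G]=[\H,\H]=\D_0\cap\D_n^+(\pm 1)$ (note that $[\G,\G]=[\H,\H]$ since scalars are central). View $\D_0$ additively as an $\mathbb{F}_2[\PP]$-submodule of the regular module $\mathbb{F}_2^n\cong\mathbb{F}_2[\PP]$; the commutator identities together with the commutativity of $\PP$ and $\D_0$ show that $[\H,\H]$ is generated by the elements $[P,D]=D^PD^{-1}$, which correspond to $(\sigma-1)d$, so $[\H,\H]$ corresponds to $I_\PP\cdot\D_0$, where $I_\PP$ is the augmentation ideal. Because $|\PP|=n$ is odd, $\eta:=\sum_{\sigma\in\PP}\sigma$ satisfies $\eta^2=n\eta=\eta$ in $\mathbb{F}_2[\PP]$, so $1-\eta$ is an idempotent generating $I_\PP$. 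For any $\mathbb{F}_2[\PP]$-submodule $M$, each $m\in M\cap I_\PP$ satisfies $\eta m=0$ and hence $m=(1-\eta)m\in I_\PP\cdot M$, giving $I_\PP\cdot M=M\cap I_\PP$. Under our identification the augmentation corresponds to the determinant on $\D_n(\pm 1)$, so $\J$ equals the determinant-one part of $\D_0$. Since $-I\in\D_0$ has determinant $(-1)^n=-1$, we have $\D_0=\{\pm I\}\J$, whence $\G=\Cx\H=\Cx\PP\D_0=\Cx\PP\J$; triviality of the pairwise intersections (permutations vs.\ scalars vs.\ signed diagonals) together with normality of $\J$ in $\G$ gives the semidirect product $\G=\Cx\PP\ltimes\J$.

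For nonscalarity: in $\D_n^+(\pm 1)$ with $n$ odd the only scalar is $I$, so if $\J$ were scalar it would equal $\{I\}$; then $I_\PP\cdot\D_0=0$ would force $\D_0$ to consist of $\PP$-fixed vectors, and transitivity of $\PP$ on coordinates would give $\D_0\subseteq\{\pm I\}$ and hence $\G\subseteq\Cx\PP$. But $\PP$ is abelian and simultaneously diagonalizable by its character decomposition, which would make $\G$ reducible for $n>1$, contradicting irreducibility. The main obstacle I expect is the module-theoretic identification $[\H,\H]=\D_0\cap\D_n^+(\pm 1)$; the crucial use of oddness of $n$ is what renders the augmentation ideal of $\mathbb{F}_2[\PP]$ idempotent and allows this identification to go through.
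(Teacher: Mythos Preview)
Your proof is correct and begins exactly as the paper does, invoking Proposition~\ref{prop-commutatorsubgroup} and then Lemma~\ref{lem-split}. (Incidentally, your assignment $Y=\Cx$, $X=\{\pm 1\}$ is the one that actually works; the paper writes $X=\Cx$, $Y=\{-1,1\}$, which appears to be a slip.)

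Where the paper's two-line proof stops after citing these two results and leaves the identification $\J=[\G,\G]$, the semidirect decomposition $\G=\Cx\PP\ltimes\J$, and the nonscalarity of $\J$ to the reader, you supply a genuine argument for all three. Your $\mathbb{F}_2[\PP]$-module computation---using that $\eta=\sum_{\sigma\in\PP}\sigma$ is idempotent when $|\PP|=n$ is odd, so that $I_\PP\cdot M=M\cap I_\PP$ for any submodule $M$---is a clean way to pin down $[\H,\H]$ as exactly the determinant-one part of $\D_0$, and the step $\D_0=\{\pm I\}\J$ (using $\det(-I)=-1$ for odd $n$) then gives $\G=\Cx\PP\J$ with trivial intersection. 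This is more than the paper writes out, and it is the right way to justify the stated conclusion rather than merely assert it.
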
 
\begin{proof}
By Proposition \ref{prop-commutatorsubgroup} and the fact that $\G=\mathbb{C}^\times\G$ we get that $\G=\mathbb{C}^\times\H$, where $\H$ is an indecomposable signed permutation group with commutative pattern.  Now use Lemma \ref{lem-split} with $X=\mathbb{C}^\times$ and $Y=\{-1,1\}$.
\end{proof}

\begin{remark} If $\G$ is compact, then we can replace $\mathbb{C}^\times$ by the unit circle $\{z\in\mathbb{C}:|z|=1\}$ to get the analogous conclusion. 
\end{remark}

\section{Structure of compact groups of matrices in which all ring commutators have real spectra}

The main result of this section is the following theorem.  

\begin{thm}\label{thm-maingrp}  Let $\G\subseteq\Mn$ be an irreducible compact group.  Then the following are equivalent.
\begin{enumerate}
\item All ring commutators $AB-BA$, $A,B\in\G$, have real spectra.
\item Number $n$ is odd and, up to simultaneous similarity $\G=\C_n\D$ for some nonscalar $\C_n$-stable subgroup $\D$ of $\J_n$. 
\end{enumerate}
\end{thm}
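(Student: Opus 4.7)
I would prove the two implications separately.

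\textbf{(2) $\Rightarrow$ (1).}  For $A = C_n^a J_1$ and $B = C_n^b J_2$ with $J_1, J_2 \in \D \subseteq \J_n$, a direct computation gives $AB - BA = C_n^{a+b} E$ where $E$ is the diagonal matrix with entries $E_{ii} = (J_1)_{i+b}(J_2)_i - (J_2)_{i+a}(J_1)_i \in \{-2, 0, 2\}$.  If $a+b \equiv 0 \pmod n$ the commutator is real diagonal.  Otherwise, with $d = \gcd(a+b,n)$, each cycle $\sigma$ of $C_n^{a+b}$ is a cycle of $C_n^d$ of odd length $n/d \geq 3$, and real spectrum on $\sigma$ is equivalent to the cycle product of $E$-entries vanishing.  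Assume the contrary: then $(J_1)_{i+b}(J_2)_i = -(J_2)_{i+a}(J_1)_i$ for each $i \in \sigma$, and these relations multiply around $\sigma$ to $(-1)^{n/d} = -1$.  But $\sigma$, $\sigma+a$, $\sigma+b$ are all cycles of $C_n^d$, so the $\J_n$-identity $\avg_{C_n^d}(J) = \det(J) I$ rewrites the same product as $\det(J_1)^2\det(J_2)^2 = 1$---a contradiction.

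\textbf{(1) $\Rightarrow$ (2), reduction to finite signed permutations.}  After Weyl averaging assume $\G \subseteq U(n)$.  Differentiating (1): for $X, Y$ in the Lie algebra $\mathfrak{g}$ of $\G$, $[X, Y]$ is skew-Hermitian with real spectrum, hence zero; so $\mathfrak{g}$ is abelian and $\G^0$ is a torus.  If $\G^0$ were noncentral, one could pick distinct weights $\chi \neq \chi'$ of $\G^0$ on $\mathbb{C}^n$, $B \in \G$ sending $V_\chi$ to $V_{\chi'}$, and $D \in \G^0$ generic (so $\chi(D) \neq \chi'(D)$); a cycle-product computation then shows $[B, D]$ has non-real spectrum (imaginary when the $B$-orbit on weights has length $2$, nonreal $m$-th roots when $m \geq 3$).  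Hence $\G^0 \subseteq \mathbb{T} I$; the scaling identity $[\lambda A, B] = \lambda [A, B]$ further rules out nontrivial circle scalars (else $\G$ is abelian, forcing $n = 1$), so $\G$ is finite.  Writing $AB - BA = (M - I) \cdot BA$ for $M = ABA^{-1}B^{-1}$ unitary, a spectral argument combining real spectrum of $(M - I) \cdot BA$ with unitarity of $BA$ forces every eigenvalue of $M$ into $\mathbb{R} \cap \mathbb{T} = \{\pm 1\}$, so every group commutator is an involution.  Proposition \ref{prop-commutatorsubgroup} then places $\G$, up to similarity, inside $\CC^\times \H$ for a signed permutation group $\H$ with commutative indecomposable pattern $\PP = \Pat(\G)$, and Lemma \ref{lem-split} absorbs the circle scalars, giving $\G = \mathcal{L} \cdot \D$ with $\D = \G \cap \D_n(\pm 1)$ and $\mathcal{L}$ a lift of $\PP$ into $\G$.

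\textbf{(1) $\Rightarrow$ (2), identifying $\C_n$, $\J_n$, and odd $n$.}  For $G \in \mathcal{L}$ lifting $\tau \in \PP$ and $J \in \D$, the commutator $GJ - JG$ is a $\pm 1$-weighted monomial matrix with pattern $\tau$; on each cycle of $\tau$ of length $\ell \geq 3$, real spectrum forces the cycle product of weights $(j_i - j_{\sigma(i)})$ to vanish, and collecting these vanishings over all $G$ unwinds precisely to $\avg_G(J) = \det(J) I$, so $\D \subseteq \J_\PP$.  Since $\G$ is irreducible, $\D$ is nonscalar, so $\J_\PP \neq \{\pm I\}$, and Lemma \ref{lem-cyc} forces $\PP$ cyclic; indecomposability then pins $\PP = \C_n$.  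If $n$ were even, the two-cycles of $C_n^{n/2}$ yield purely imaginary spectrum in $[C_n^{n/2}, J]$ whenever $\det(J) = -1$, forcing $\D \subseteq \J_n^+$; but $J \in \J_n^+$ satisfies $j_i = j_{i+n/2}$, so $J$ and $C_n$ both preserve the $+1$-eigenspace of the noncentral involution $C_n^{n/2}$, making $\G$ reducible---a contradiction.  Thus $n$ is odd and $\G = \C_n \D$ with nonscalar $\D \subseteq \J_n$, establishing (2).  The main obstacle is the finiteness reduction: excluding a noncentral torus $\G^0$ requires combining Clifford's block decomposition with the cycle-product bookkeeping that pervades the paper; once $\G$ is seen to be finite and signed-permutation-like, the remaining identifications of $\PP = \C_n$ and $\D \subseteq \J_n$ are natural consequences of the $\J_n$-machinery of Section~2.
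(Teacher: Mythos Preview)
Your direction $(2)\Rightarrow(1)$ is correct; your cycle--product bookkeeping is a clean alternative to the paper's Lemma~\ref{lem-nil}, which instead shows $((X-Y)G)^m=0$ by expanding the product and pairing complementary subsets of $\langle G\rangle$.

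The direction $(1)\Rightarrow(2)$, however, contains a genuine gap at the sentence ``a spectral argument combining real spectrum of $(M-I)\cdot BA$ with unitarity of $BA$ forces every eigenvalue of $M$ into $\{\pm1\}$.''  This inference fails already for $2\times 2$ unitaries: with
\[
M=\begin{pmatrix} i&0\\0&-i\end{pmatrix},\qquad U=\begin{pmatrix}0&1\\1&0\end{pmatrix},
\]
one has $(M-I)U=\begin{pmatrix}0&i-1\\-i-1&0\end{pmatrix}$ with spectrum $\{\pm\sqrt{2}\}\subset\RR$, yet $\sigma(M)=\{\pm i\}$.  So knowing only that a single ring commutator $AB-BA=(M-I)BA$ has real spectrum, together with unitarity, does not constrain $\sigma(M)$ to $\{\pm1\}$; you would need to combine many ring commutators simultaneously, and you have not indicated how.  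Worse, even granting that every \emph{single} group commutator $ABA^{-1}B^{-1}$ is an involution, your next step invokes Proposition~\ref{prop-commutatorsubgroup}, whose hypothesis is that the entire commutator \emph{subgroup} consists of involutions.  Passing from ``every commutator is an involution'' to ``the commutator subgroup consists of involutions'' is precisely the open Question~3.3 of the paper, so this is not a step you can take for free.

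The paper avoids both problems by never attempting to control group commutators directly.  Its engine is the pair Theorem~\ref{noncentral} (a nonabelian compact group with real ring--commutator spectra contains a noncentral involution) and Lemma~\ref{involutions} (under the hypothesis, the set of involutions is automatically a \emph{commutative} normal subgroup $\J$, via the computation $(JK-KJ)^2=-4\,\diag{BB^*,\,B^*B}$).  These two facts feed Clifford's theorem, and the proof then proceeds by induction on $n$ through the block structure (monomializability, diagonal elements are involutions, finiteness, no elements of order $4$, Schur--Zassenhaus splitting $\G=\K\ltimes\J$, and the ``no diagonal commutation'' step that pins down $\J\subseteq\J_n$ and $\K=\C_n$).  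Your Lie--theoretic reduction to finiteness is an interesting alternative opening move, and your endgame (using Lemma~\ref{lem-cyc} once $\D\subseteq\J_\PP$ is known) matches the paper's; but the bridge between them --- showing the commutator subgroup consists of involutions --- cannot be crossed the way you propose.
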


We will need several technical results, in addition to earlier discussion, before we can start with the proof.  But first, let us state the following corollary which will be needed in the last section.

\begin{cor}\label{cor-main}  Let $\G$ be an irreducible compact group in which all ring commutators have real spectra.  Then, up to simultaneous similarity, $\G$ is s signed permutation group with commutative pattern.  In particular $\G$ is realizable.
\end{cor}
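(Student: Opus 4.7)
The plan is to read off the corollary directly from Theorem \ref{thm-maingrp}, using only the description of $\J_n$ established in Section~2. First I would invoke the theorem to reduce, up to simultaneous similarity, to the case $\G=\C_n\D$ with $n$ odd and $\D$ a nonscalar $\C_n$-stable subgroup of $\J_n$.

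Next I would unpack what this says about the matrix entries. By construction every element of $\C_n$ is an (unsigned) permutation matrix and every element of $\J_n\subseteq\D_n(\pm 1)$ is a signed diagonal matrix. Multiplying a permutation matrix on the left by a signed diagonal matrix, and conversely, one checks immediately that $\D\C_n$ (equivalently $\C_n\D$, since $\D$ is $\C_n$-stable) consists of signed permutation matrices. Hence $\G$ is a signed permutation group.

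To identify the pattern, I would note that $\Pat(J)=I$ for every $J\in\D\subseteq\D_n(\pm 1)$ and that every $C\in\C_n$ is already a permutation matrix so $\Pat(C)=C$. Therefore $\Pat(\G)=\Pat(\C_n\D)=\C_n$, which is a cyclic group and in particular commutative. This proves the first statement.

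For realizability, I would simply observe that signed permutation matrices have entries in $\{-1,0,1\}\subseteq\mathbb{R}$, so after the similarity provided by Theorem \ref{thm-maingrp} the group $\G$ already lies in $\M_n(\mathbb{R})$. There is no genuine obstacle here: the corollary is a translation of the structural statement of Theorem \ref{thm-maingrp} into the language of Proposition \ref{prop-commutatorsubgroup}, and the only thing to verify is that the shape $\C_n\D$ is literally a signed permutation group with cyclic pattern, which follows from the definitions of $\C_n$ and $\J_n$.
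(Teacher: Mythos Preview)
Your argument is correct and matches the paper's approach: in the paper the corollary is stated immediately after Theorem~\ref{thm-maingrp} and given no proof beyond a \qed, because it follows at once from the structural description $\G=\C_n\D$ with $\D\subseteq\J_n\subseteq\D_n(\pm1)$. Your unpacking of why $\C_n\D$ is a signed permutation group with cyclic (hence commutative) pattern is exactly the intended reading.
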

\qed

The key ingredient of the proof of Theorem \ref{thm-maingrp} is the following result from
\cite{MR}.
\begin{thm}[cf. {\cite[Theorem 3.05 ]{MR}}]\label{noncentral}
If $\mathcal{G}\subseteq \Mn$ is a nonabelian compact group of matrices
such that every ring commutator $ST-TS$, $S, T\in\mathcal{G}$ has real
spectrum, then $\mathcal{G}$ contains a noncentral involution.
\end{thm}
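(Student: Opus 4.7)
The plan is to argue the contrapositive: assume $\G$ has no noncentral involution and derive that $\G$ must be abelian, contradicting the hypothesis. After choosing a $\G$-invariant inner product I may assume $\G\subseteq U(n)$, so that $\G$ is a compact Lie subgroup of the unitary group.

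First I would pin down the identity component. For skew-Hermitian $X,Y\in\mathrm{Lie}(\G)$, the Taylor expansion
$$
e^{tX}e^{sY}-e^{sY}e^{tX} = ts[X,Y] + O\bigl((|t|+|s|)^3\bigr)
$$
has leading term $ts[X,Y]$, which is skew-Hermitian and hence has purely imaginary spectrum. Specializing $t=s$, dividing by $t^2$, and letting $t\to 0$, the real-spectrum hypothesis forces every eigenvalue of $[X,Y]$ to be simultaneously purely imaginary and real, hence zero. So $\mathrm{Lie}(\G)$ is abelian and $\G^0$ is a torus $T$.

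Next I would examine the conjugation action of $\G$ on the finite $2$-torsion $T[2]$. Orbits are finite; if any has size greater than one, the corresponding element of $T[2]$ is a noncentral involution and we are done. Otherwise $T[2]\subseteq Z(\G)$. Decomposing $\mathbb{C}^n$ into the common eigenspaces of $T$, the group $\G$ normalizes $T$ and so, after similarity, acts by block-monomial matrices with respect to this decomposition; the condition $T[2]\subseteq Z(\G)$ then becomes a congruence condition on the characters of $T$ on the blocks. Applying Clifford's theorem (Theorem \ref{thm-Clifford}) to an irreducible constituent, together with Schur's lemma on the central torus, reduces the problem to a finite nonabelian unitary group having the real-spectrum property and no noncentral involutions.

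For the finite core, pick non-commuting $S,T\in\G$ and use the factorization
$$
ST-TS = (ST)\bigl(I-c\bigr),\qquad c = T^{-1}S^{-1}TS.
$$
When $c$ is a scalar, real spectrality of the left side combined with unitarity of $ST$ forces the eigenvalues of $ST$ to lie on a single line through the origin, so $(ST)^2$ is scalar. A power analysis of the cyclic subgroup $\langle ST\rangle$ then yields a noncentral involution in $\G$ unless $(ST)^2=-I$, in which case the only involution in $\langle ST\rangle$ is the central element $-I$ and one must bring in a second generator to exhibit a noncentral involution outside $\langle ST\rangle$. When $c$ is non-scalar, I would iterate with the pair $(c,S)$ or pass to a minimal normal subgroup, eventually reducing to the scalar case.

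The main obstacle I anticipate is the finite sub-case in which $(ST)^2=-I$: the naive cyclic-subgroup argument fails there, and one must leverage the block-monomial structure together with the real-spectrum condition applied to a fresh pair of non-commuting elements to locate an involution in an off-diagonal block or in a different cyclic subgroup. Threading this case analysis compactly, while respecting unitarity and the no-noncentral-involution ban, is the technical heart of the proof and mirrors the detailed argument in \cite{MR}.
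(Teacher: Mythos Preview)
The paper does not prove this theorem at all: it is quoted verbatim from \cite[Theorem~3.05]{MR} and closed with a bare \qed. There is therefore no in-paper argument to compare your sketch against; the paper simply imports the result as a black box.

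Turning to the content of your sketch: the Lie-algebra argument showing that $\G^0$ is a torus is correct and pleasant. The limit $\tfrac{1}{t^2}\bigl(e^{tX}e^{tY}-e^{tY}e^{tX}\bigr)\to[X,Y]$, combined with continuity of spectra and the fact that $[X,Y]$ is skew-Hermitian, does force $[X,Y]=0$.

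The subsequent reduction to a finite group, however, is not carried through. Knowing only that $T[2]\subseteq Z(\G)$ does \emph{not} make the whole torus central (the conjugation action of $\G/\G^0$ on $T\cong(S^1)^k$ is by a matrix in $\mathrm{GL}_k(\mathbb{Z})$ that is merely $\equiv I\pmod 2$), so you cannot simply pass to $\G/T$ and declare the problem finite. Your sentence invoking Clifford's theorem and a ``congruence condition on the characters'' gestures at the right objects but does not explain why a nontrivial permutation of the $T$-weight spaces, or a nontrivial block, must produce a noncentral involution. That step needs an actual argument.

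Finally, and as you yourself flag, the finite case is the heart of the matter. Your scalar-commutator computation correctly yields that $(ST)^2$ is scalar when $[S,T]_{\mathrm{grp}}$ is scalar, but the cases $(ST)^2=\pm I$ and the non-scalar case are left open with only a plan to ``iterate'' or ``pass to a minimal normal subgroup.'' That is precisely the work done in \cite{MR}, and without it your proposal remains an outline rather than a proof. In short, both the paper and your proposal ultimately lean on \cite{MR}; your contribution is a plausible roadmap for the compact-to-finite reduction, but with a real gap at that reduction and an acknowledged gap at the finite core.
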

\qed

\begin{lemma}\label{lem-nil} Let $n$ be odd and let $\G=\C_n\J_n$. If $G\in\G$ is not diagonal and $X,Y\in\G$ are diagonal elements of equal determinants, then $(X-Y)G$ is nilpotent.
\end{lemma}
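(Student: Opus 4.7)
The plan is to exploit the cyclic block structure of $M=(X-Y)G$. Since $G\in\G=\C_n\J_n$ is not diagonal, we write $G=C_n^\ell J$ with $J\in\J_n$ and $\ell\not\equiv 0\pmod n$, and set $d=\gcd(\ell,n)$ and $k_0=n/d$. The pattern of $G$ is the permutation $j\mapsto j+\ell$ on $\mathbb Z/n\mathbb Z$, whose cycles are the $d$ cosets $S_1,\ldots,S_d$ of $\langle\ell\rangle=d\mathbb Z/n\mathbb Z$, each of cardinality $k_0$. Because $X$ and $Y$ are diagonal, $M$ has the same zero-pattern as $G$, so each $V_a=\Span(e_j:j\in S_a)$ is $M$-invariant and $M=\bigoplus_a M|_{V_a}$.

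On $V_a$, enumerating $S_a$ as $j_1,\ldots,j_{k_0}$ with $j_{t+1}=j_t+\ell$, a direct computation gives $Me_{j_t}=w_t e_{j_{t+1}}$ where $w_t=(x_{j_{t+1}}-y_{j_{t+1}})J_{j_t}$, so $(M|_{V_a})^{k_0}=\bigl(\prod_{t}w_t\bigr)I_{V_a}$. Hence $M|_{V_a}$ is nilpotent iff $\prod_t w_t=0$, and since $J_{j_t}\in\{\pm 1\}$ this happens iff $x_m=y_m$ for some $m\in S_a$. Consequently $M$ is nilpotent if and only if every cycle $S_a$ contains at least one index on which $X$ and $Y$ agree.

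Suppose for contradiction that some coset $S_{a_0}$ has $y_m=-x_m$ for every $m\in S_{a_0}$. Because $C_n^d\neq I$, the defining identity $\avg_{C_n^d}(X)=\det(X) I$ applies, and reading off the $(j,j)$-entry collapses it to $\prod_{m\in S_{\bar j}}x_m=\det(X)$ (where $S_{\bar j}$ is the coset of $j$); the analogue holds for $Y$. Applied to $S_{a_0}$ these give $\det(Y)=(-1)^{k_0}\det(X)$, and since $n$ is odd and $k_0\mid n$, $k_0$ is odd, so $\det(Y)=-\det(X)$, contradicting $\det(X)=\det(Y)\in\{\pm 1\}$.

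The main obstacle is the translation from the abstract averaging condition defining $\J_n$ into the concrete product-over-cycle identity needed to analyze the restrictions $M|_{V_a}$; once this link is in place, the combination of the parity $(-1)^{k_0}$ with the oddness of $n$ closes the argument almost mechanically.
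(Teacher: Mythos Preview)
Your proof is correct and takes a genuinely different route from the paper's. The paper first reduces to $G\in\C_n$, then computes $((X-Y)G)^m$ (with $m$ the order of $G$) globally as $\prod_{H\in\langle G\rangle}(X-Y)^H$, expands this into a signed sum over subsets $\A\subseteq\langle G\rangle$ of products $\prod_{A\in\A}X^A\prod_{B\notin\A}Y^B$, and then observes that the $\J_n$ condition forces $\prod_{A\in\A}X^A=\prod_{B\notin\A}X^B$, so that the terms for $\A$ and its complement cancel (the oddness of $m$ making complementation fixed-point-free).

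Your argument instead decomposes $\mathbb{C}^n$ along the cycle structure of $\Pat(G)$ and shows something sharper: on \emph{each} cycle $S_a$, $X$ and $Y$ must actually agree at some index, so $(X-Y)G$ already has a vanishing weight there. The parity punch line is the same (odd cycle lengths force $(-1)^{k_0}=-1$), but you reach it via the concrete product-over-cycle identity $\prod_{m\in S_a}x_m=\det(X)$ rather than the subset-cancellation trick. Your approach is more combinatorial and yields a little extra structural information; the paper's is more uniform and avoids block decomposition entirely. Both lean on the same defining property of $\J_n$.
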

\begin{proof}  With no loss of generality we assume that $G\in \C_n$ (if necessary, replace $\G$ by $\Pat(\G)$, $X$ by $XG\Pat(G)^{-1}$, and $Y$ by $YG\Pat(G)^{-1}$) and that $\det(X)=1=\det(Y)$ (if necessary, replace $X$ by $-X$ and $Y$ by $-Y$).  Let $m$ be the order of $G$.  Note that $m$ is odd as it must divide $n$.
We now compute
\begin{eqnarray*}
((X-Y)G)^m &=& G^{-m}((X-Y)G)^m = G^{-m}(X-Y)G ((X-Y)G)^{m-1} \\
&=& G^{-m}(X-Y)G^m G^{-(m-1)}((X-Y)G)^{m-1} \\
&=& (X-Y)^{G^m} G^{-(m-1)}((X-Y)G)^{m-1}=\ldots \\
&=& (X-Y)^{G^m} (X-Y)^{G^{m-1}}\ldots (X-Y)^G \\
&=& \prod_{H\in\langle G\rangle} (X-Y)^H \\
&=& \sum_{\mathcal{A}\subseteq \langle G\rangle} (-1)^{m-|\A|}\prod_{A\in \A} X^A \prod_{B\in \langle G\rangle\backslash \A} Y^B.
\end{eqnarray*}
For every $\A\subseteq\langle G\rangle$ we have that $\prod_{A\in \A} X^A \prod_{B\in \langle G\rangle\backslash \A} X^B=
\prod_{C\in \langle G\rangle} X^C=I$ and hence $\prod_{A\in \A} X^A=\prod_{B\not\in\A} X^B$ (as $X^{-1}=X$).
Since $\langle G\rangle$ is of odd order, we therefore have that in the above sum the terms corresponding to $\A$ and $\langle G\rangle\backslash \A$ cancel
and the sum is thus $0$.
\end{proof}

\begin{lemma}\label{involutions}
Let $\mathcal{G}$ be a compact group of matrices and let $\J$ be the set
of all involutions in $\G$.  If all ring commutators of elements of $\G$
have real spectra then $\J$ is a commutative normal subgroup of $\G$.
\end{lemma}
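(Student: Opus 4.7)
The plan is to split the conclusion into three separate claims: (i) $\J$ is closed under conjugation by $\G$, (ii) $\J$ is commutative, and (iii) $\J$ is closed under products and inverses. Normality in (i) is immediate, since for any $G\in\G$ and $J\in\J$ we have $(GJG^{-1})^2=GJ^2G^{-1}=I$, so $GJG^{-1}\in\J$. Moreover, once (ii) is known, (iii) is automatic: every involution is its own inverse, $I\in\J$, and if $J_1,J_2\in\J$ commute then $(J_1J_2)^2=J_1^2J_2^2=I$, so $J_1J_2\in\J$. Thus the real content of the lemma is the commutativity of $\J$.

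For commutativity, the plan is to use a unitarization trick. Since $\G$ is compact, after a simultaneous similarity we may assume $\G$ is a group of unitary matrices; this similarity preserves the class of involutions, the spectra of ring commutators, and both normality and commutativity of $\J$, so nothing is lost. Now every $J\in\J$ is unitary and satisfies $J=J^{-1}=J^{*}$, so $J$ is Hermitian. For $J_1,J_2\in\J$, the ring commutator $C=J_1J_2-J_2J_1$ therefore satisfies
$$C^{*}=(J_1J_2)^{*}-(J_2J_1)^{*}=J_2J_1-J_1J_2=-C,$$
i.e.\ $C$ is skew-Hermitian.

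The main step is then the observation that a skew-Hermitian matrix is normal and has purely imaginary spectrum. By hypothesis the spectrum of $C$ is also real, so it must equal $\{0\}$. A normal matrix with spectrum $\{0\}$ is the zero matrix, so $C=0$ and $J_1J_2=J_2J_1$. With commutativity in hand, the subgroup closure argument from the first paragraph finishes the proof. I do not expect a serious obstacle: the hypothesis on ring commutators enters in one clean line, and compactness is used only to justify the unitarization that converts involutions into Hermitian matrices.
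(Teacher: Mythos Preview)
Your proof is correct. Both your argument and the paper's hinge on the same two moves: unitarize (using compactness), then use that a unitary involution satisfies $J=J^{-1}=J^{*}$ and is therefore Hermitian. The difference lies in how the vanishing of $C=J_1J_2-J_2J_1$ is extracted. The paper puts $J_1=\begin{pmatrix} I & 0\\ 0 & -I\end{pmatrix}$, writes $J_2=\begin{pmatrix} A & B\\ B^{*} & D\end{pmatrix}$, and computes $(J_1J_2-J_2J_1)^2=-4\begin{pmatrix} BB^{*} & 0\\ 0 & B^{*}B\end{pmatrix}$; reality of the spectrum of $C$ forces this to be positive semidefinite, whence $B=0$. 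Your route is more abstract and shorter: the commutator of two Hermitian matrices is skew-Hermitian, hence normal with purely imaginary spectrum, and the hypothesis forces the spectrum to be $\{0\}$, so $C=0$ by the spectral theorem. Your version avoids coordinates entirely and makes transparent exactly where the real-spectrum hypothesis bites; the paper's block computation is more hands-on but yields the same conclusion with slightly more work.
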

\begin{proof} Note that it is sufficient to prove that $\J$ is a
commutative set.  Now suppose, if possible, that there exist a pair $J,K$
of noncommuting involutions in $\G$. With no loss of generality assume
that $\G$ is a group of unitary matrices, that $J=\begin{pmatrix} I & 0\\
0 & -I\end{pmatrix}$ (diagonal blocks are of nonzero, possibly different,
sizes), and that $K=\begin{pmatrix} A & B \\ C & D\end{pmatrix}$.  Since
$K^*=K$ (by assumption $K^2=I=KK^*$) we get that $C^*=B$.  A routine
computation shows that $([J, K]_r)^2=(JK-KJ)^2=-4\begin{pmatrix} BB^* &
0\\ 0 & B^*B\end{pmatrix}$.  Since $[J,K]_r=JK-KJ$ has real spectrum we
conclude that $([J, K]_r)^2$ has a nonnegative real spectrum and thus we
must have $C=B=0$.  But then $JK=KJ$, contradicting our initial
assumption.
\end{proof}

We now proceed with the proof of Theorem \ref{thm-maingrp}.
\begin{proof}(of \ref{thm-maingrp}).

(2)$\implies$ (1): Assume that $\G=\C_n\D$, where $\D$ is a non-scalar subgroup of $\J_n$.  Let $A,B\in \G$.  Note that $\Pat(AB)=\Pat(BA)$.  If
$AB$ is diagonal, then so is $BA$ and hence $\sigma(AB-BA)\subseteq \{-2,0,2\}$.  If $AB$ is not diagonal, then
$AB-BA=(ABA^{-1}B^{-1}-I)BA$.  Now apply Lemma \ref{lem-nil} with $X=ABA^{-1}B^{-1}, Y=I$ and $G=BA$ to conclude that $\sigma(AB-BA)=\{0\}$.

(1)$\implies$ (2): Let $\G$ be an irreducible compact group of $n\times n$ matrices such that
all ring commutators $ST-TS$ of elements $S,T\in\G$ have real spectra.
Let $\J$ be the set of all involutions in $\G$.  Recall that by Lemma
\ref{involutions} $\J$ is a commutative normal subgroup of $\G$.  With no
loss we assume from now on that $\J$ is a subset of diagonal matrices.

Let $\mathbb{C}=\V_1\oplus\ldots\oplus\V_r$ be the weight space decomposition with respect to the action of $\J$ on $\mathbb{C}^n$, i.e., $\V_i$'s are maximal $\J$-invariant subspaces of $\mathbb{C}^n$ such that the restrictions $\J|_{\V_i}$ are scalar.  Let $P_1,\ldots, P_r$ be the projections to the corresponding summands in this direct sum decomposition.

By Clifford's Theorem, we have that the spaces $\V_1, \ldots,
\V_r$ are all of dimension equal to $s=n/r$, that $\G$ acts transitively on the set $\{\V_1,\ldots, \V_r\}$, and that $\G$ is block monomial with respect to the decomposition $\CC^n=\V_1\oplus\ldots\oplus \V_r$.   Abbreviate the
irreducible group $\{G\in\G: G(\V_1)\subseteq\V_1\}|_{\V_1}=P_1\G P_1\setminus\{0\}$ of nonzero elements in the $(1,1)$-block of this block decomposition by $\H$.  Form here on we also assume, with no loss, that all blocks $P_j\G P_i\setminus\{0\}$ of $\G$ are individually equal to $\H$ (see Proposition \ref{prop-block}).

Also note that by Theorem \ref{noncentral} we have that $r\not=1$ and hence $s<n$ and observe that for every pair of distinct integers $p,q$, $1\le p,q\le r$ there is an element $J\in\J$ such that its $(p,p)$-block is the negative of its $(q,q)$-block.
We now proceed in small steps.

\noindent\textrm{STEP ONE}: \textsl{$\G$ is monomializable.}

This is proven by induction.  The statement is clear for $n=1$ and also
for $s=1$.  Now assume that $1<n,s$ and that for $m<n$ all $m\times m$
irreducible compact matrix groups with ring commutators having real
spectra are monomializable.  Hence $\H$ is monomializable and hence so is
$\G$.

From now on assume that $\G$ is monomial.

\noindent\textrm{STEP TWO:} \textsl{All diagonal elements of $\G$ are
involutions (and thus belong to $\J$).}

If $s>1$, then we can (using induction) assume that the statement holds
for $\H$ and then it must automatically also hold for $\G$.
Suppose now that $s=1$ and let $D=\diag{d_1,\ldots, d_n}$.  Pick
$i\in\{2,\ldots, n\}$, let $J\in\J$ be such that
$J_{11}=-J_{ii}$, and let $G\in\G$ be such that $Ge_i\in \mathbb{C} e_i$.
Note that the $(1,1)$ entries of the diagonal matrices
$G D G^{-1}-D=[GD,G^{-1}]_r$ and $G(JD)G^{-1}-JD=[GJD,G^{-1}]_r$ are $d_i-d_1$ and $\pm(d_i+d_1)$.
Since these entries must be real we deduce that $d_i$ is real
and thus equal to $\pm 1$.

\noindent\textrm{STEP THREE:} \textsl{$\G$ is finite.}

Let $G\in\G$.  Note that $G^{n!}$ is a diagonal matrix as $\G$ is
monomial.  By the argument above all diagonal matrices in $\G$ are involutions and
hence $\G^{2n!}=I$.  So $\G$ is an irreducible matrix group of finite exponent and is
thus finite.  (This follows, e.g., from \cite{RR1}, since the trace functional, when restricted to $\G$ has a finite number of values.)

\noindent\textrm{STEP FOUR:} \textsl{$\G$ contains no elements of order
$4$.}

We do a proof by contradiction.  Suppose $G\in\G$ is such that $G^4=I$ and
$G^2\not=I$.  We use induction to assume that if $s>1$, then $\H$ has no
elements of order $4$.  This implies that $G$ cannot be block diagonal (if
$s=1$ this fact follows from Step Two above).
Hence we can assume, using a similarity by a block permutation if
necessary, that the compression $G_0$ of $G$ to $\V_1 \oplus \V_2$  has the
form
$$
G_0=\begin{pmatrix} 0 & X \\ Y & 0\end{pmatrix},
$$
with $G_0^2\not=I$.
Let $J\in\J$ be such that its compression $J_0$ to $\V_1\oplus \V_2$ is
given by
$$
J_0=\pm\begin{pmatrix} I & 0\\ 0 & -I\end{pmatrix}.
$$
Now note that $(J_0 G_0)J_0^{-1} - J_0^{-1}(J_0 G_0) = J_0 G_0 J_0^{-1} -
G_0 = -2G_0$ does not have real spectrum.  A contradiction.

\noindent\textrm{STEP FIVE:} \textsl{$\J$ is complemented in $\G$}

Since $\G$ contains no elements of order four we conclude that $\G/\J$
contains no elements of order two and hence $m:=\left|\G/\J\right|$ is
odd.  By the Schur-Zassenhaus Theorem $\J$ is
complemented, that is there exists a subgroup
$\K\le \G$ of order $m$ such that $\G=\K\ltimes\J$.  That is, $\K\cap
\J=\{I\}$ and $\K\J = \G$.

\noindent\textrm{STEP SIX:} \textsl{$s=1$.}

If $s>1$ then $\H$ contains a noncentral involution $J_0$.  Let $G\in\G$
be an element whose $(1,1)$-block is equal to $J_0$.  Then
$G^m$ (where, as in Step Five, $m=\left|\G/\J\right| $) is an involution
whose $(1,1)$ block is $J_0$.  This is impossible since, by construction,
the blocks of elements of $\J$ can only be $\pm I$.

\noindent\textrm{STEP SEVEN:} \textsl{$\K$ is commutative.}

If $\K$ were not commutative, then it would contain a noncentral
involution.  This is impossible since the order of $\K$ is odd.

\noindent\textrm{STEP EIGHT:} \textsl{$\K$ is, up to monomial similarity,
a permutation group (equal to tensor product of
cyclic groups).}  The claim follows from Lemma \ref{transitive-commutative}.

\noindent\textrm{STEP NINE:} \textsl{$\G$ has no diagonal commutation, that is, if $G\in \G\backslash \J$ and
$J\in\J\backslash \{\pm I\}$, then $JG\not=GJ$.}  We do a proof by contradiction.  Suppose, if possible, that $G\in \G$ is non-diagonal, and $J\in \J$ is nonscalar such that $GJ=JG$.  We assume with no loss that $G\in\K\setminus\{I\}$. (Any non-diagonal $G\in \G\setminus\J$ is of the form $G=J_1G_1$ for some $J_1\in\J$ and $G_1\in\K\setminus\{I\}$.  Since $J_1$ and $J$ commute we have that $J$ and $G_1$ must also commute.  So we can replace $G$ by $G_1$ if necessary.)

For $H\in\K$ we define $B_H=J^H-J$ and 
$A_H=B_H G = H^{-1}(JG)H -JG = [H^{-1}, JGH]_r$.  Note that the set $\mathcal{B}=\{B_H:H\in\K\}$ is a commuting set (it is a subset of diagonal matrices) and since all members of $\mathcal{B}$ commute with $G$ ($J$ commutes with $G$ by assumption; every $H\in\K$ commutes with $G$ as $\K$ is abelian) we have that the set $\mathcal{A}=\{A_H:H\in\K\}$ is a commuting set.  Therefore $\mathcal{A}$ is simultaneously triangularizable.  Every member of $\mathcal{A}$ is a ring commutator of elements from $\G$ and hence has real spectrum.  Since $\mathcal{A}$ is simultaneously triangularizable we therefore have that all $\mathbb{R}$-linear combinations of its elements also have real spectra.  Since the action of $\mathcal{K}$  on $\{\mathbb{C}e_1,\ldots, \mathbb{C}e_n\}$ is transitive, there must exists an $\mathbb{R}$-linear combination $B$ of members of $\mathcal{B}$ whose diagonal entries are all nonzero.  But then $A=BG$ must have real spectrum as it is an $\mathbb{R}$-linear combination of elements of $\mathcal{A}$.  But this is impossible.  Indeed, up to a permutational similarity (corresponding to decomposition of the permutation associated to $G$ into disjoint cycles) we have that $BG=B_1C_{n_1}\oplus\ldots \oplus B_kC_{n_k}$ with $B_1, \ldots , B_k$ invertible diagonal matrices.  Since $G$ is of odd order, we have that all $n_i$'s are odd.  Let $i$ be such that $n_i>1$ (it exists, as $G\not=I$).  Now observe that the spectrum of $B_iC_{n_i}$ (which is contained in the spectrum of $BG$) is equal to $\{\lambda\in\mathbb{C}: \lambda^{n_i}=\det B_i\}$ and hence not real.

\noindent\textrm{CONCLUSION:} \textsl{Up to simultaneous similarity, we have that $\G=\C_n\J$, where $\J$ is a $\C_n$-stable nonscalar subgroup of $\J_n$.}  We have already established that $\G=\K\J$.  The fact that $\K=\C_n$ follows from Lemma \ref{lem-cyc}.  The fact that $\J\subseteq \J_n$ follows from the fact that $\G$ has no diagonal commutation (established in the Step Nine). 
\end{proof}

\section{Semigroups of matrices in which all ring commutators have real spectra are realizable}
\begin{lemma}\label{rank-one}
Irreducible rank-one semigroups whose commutators have real spectra are
realizable.
\end{lemma}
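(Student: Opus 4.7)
The plan is to produce a rank-one linear functional $\varphi$ on $\Mn$ that maps $\S$ into $\RR$, and then invoke the rank-one functional theorem cited in the introduction (\cite{BMR}, \cite{BMR1}, \cite{RY}) to conclude realizability.

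First I would extract a non-nilpotent element: since $\S$ is irreducible, Levitzki's theorem prevents every element from being nilpotent, so there exists $A\in\S$ with $\tr(A)\neq 0$. Writing $A=uv^T$ for suitable column and row vectors, the key rank-one identity is
$$AMA=(v^TMu)\,uv^T=\tr(AM)\cdot A,$$
valid for every $M\in\Mn$. As $\S$ is closed under products, this means $\tr(AM)\cdot A\in\S$ whenever $\tr(AM)\neq 0$; so $\S$ automatically contains a large family of scalar multiples of $A$, which will be the engine of the argument.

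Next I would analyze the spectra of ring commutators $[A,N]$ for $N\in\S$. Both $A$ and $N$ are rank-one, so $[A,N]$ has rank at most two and zero trace; a direct computation in the $2$-dimensional span of the range vectors of $A$ and $N$ shows that its nonzero eigenvalues are $\pm\delta_N$ with
$$\delta_N^{\,2}=\tr(AN)\bigl(\tr(AN)-\tr(A)\tr(N)\bigr).$$
The hypothesis forces $\delta_N^{\,2}\in[0,\infty)$, so $\delta_N\in\RR_{\ge 0}$. Now fix any $M\in\S$ with $\tr(AM)\neq 0$: the element $\tr(AM)A$ lies in $\S$, so $[\tr(AM)A,N]=\tr(AM)\,[A,N]$ must have real spectrum, and its nonzero eigenvalues are $\pm\tr(AM)\,\delta_N$. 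Therefore, \emph{provided} some $N\in\S$ gives $\delta_N>0$, we conclude $\tr(AM)\in\RR$ for every $M$. The functional $\varphi(M):=\tr(AM)$ is then a rank-one functional with $\varphi(\S)\subseteq\RR$, and the cited realizability theorem (in its semigroup form) finishes the proof.

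The main obstacle is the \emph{degenerate case} in which $\delta_N=0$ for every $N\in\S$; I would dispose of it by showing it contradicts irreducibility. Applying $\delta^{\,2}=0$ to $N=\tr(AM)A\in\S$ one pins down $\tr(A)=1$, so after similarity we may take $A=E_{11}$. The condition $\tr(AN)(\tr(AN)-\tr(N))=0$ combined with the rank-one form $N=u_Nv_N^T$ leaves only very restrictive possibilities for each $N\in\S$: either $u_N\in[e_1]$ (range in $[e_1]$), or $v_N\in[e_1]$ (kernel containing $\mathrm{span}(e_2,\ldots,e_n)$), or $N$ is nilpotent with $(u_N)_1=0$ or $(v_N)_1=0$. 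A short case analysis of products of two such elements shows that any element of $\S$ violating invariance of $[e_1]$ together with any element violating invariance of $\mathrm{span}(e_2,\ldots,e_n)$ would have a product lying outside the allowed patterns; hence one of these two subspaces is $\S$-invariant, contradicting irreducibility. Thus the degenerate case does not arise, and the plan concludes.
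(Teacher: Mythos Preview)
Your main-case argument is correct and pleasant: the identity $AMA=\tr(AM)\,A$ together with the eigenvalue formula $\delta_N^{2}=\tr(AN)\bigl(\tr(AN)-\tr(A)\tr(N)\bigr)$ really does force $\tr(AM)\in\RR$ once a single $\delta_{N_0}\ne 0$ exists. This is a different and more direct mechanism than the paper's, which first passes to $\S=\overline{\RR^+\S}$, extracts an idempotent $E$, and only uses commutators of the special form $[EA,BE]$.

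The gap is entirely in your degenerate case. First, the sentence ``applying $\delta^2=0$ to $N=\tr(AM)A$ pins down $\tr(A)=1$'' is empty: if $N$ is a scalar multiple of $A$ then $[A,N]=0$ identically, so no information on $\tr(A)$ is obtained, and similarity cannot rescale a trace. Second, even granting $A=E_{11}$, your trichotomy for $N$ is not exhaustive: the condition $N_{11}\bigl(N_{11}-\tr(N)\bigr)=0$ allows, e.g., $N=\begin{pmatrix}0&0\\1&1\end{pmatrix}$, which is non-nilpotent with $(u_N)_1=0$ yet fits none of your three patterns. Most importantly, the degenerate case simply does not contradict irreducibility. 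Take $\S=\{0,E_{11},E_{22},E_{12},E_{21}\}\subset\M_2(\CC)$: this is an irreducible rank-one semigroup, every ring commutator has real spectrum, and with $A=E_{11}$ one has $[A,N]$ nilpotent for every $N\in\S$, so all $\delta_N=0$; the same happens for the only other non-nilpotent choice $A=E_{22}$.

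The paper confronts exactly this situation (it is in fact the heart of their argument). After reducing to $E=E_{11}\in\S$ and deducing $YZ=0$ for every $\begin{pmatrix}x&Y\\Z&T\end{pmatrix}\in\S$, they look at the commutator $[EA,BE]$ of \emph{products}, compute its square, and read off that $YZ'\in\RR$ for every pair of members; irreducibility then supplies a nonzero $Z'$, and $M\mapsto\tr\bigl(M\begin{pmatrix}0&0\\Z'&0\end{pmatrix}\bigr)$ is the sought real-valued rank-one functional. Your plan can be repaired by importing this step for the degenerate branch (after first making the harmless reduction $\S=\overline{\RR^+\S}$ so that an idempotent is available).
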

\begin{proof}
Without loss assume that the semigroup is real-homogenized and
closed.  We proceed by contradiction.  Assume, if possible that, the
semigroup is not realizable.  Then there is a member whose spectrum is not
real (as rank one-semigroups with real spectra are realizable \cite{BMR}).
This member must be of the form $\lambda E$ for  some idempotent $E$.
Since the semigroup is homogeneous and closed we conclude that $E$ belongs
to it as well \cite{RR}.  Since for any $S$ in the semigroup we have that
the spectra of $[E,S]$ and $[\lambda E, S]=\lambda [E,S]$ are real, we
conclude that for all $S$ in the semigroup we have that the commutator
$[E,S]$ is nilpotent.  From now on assume, using a simultaneous similarity
if necessary, that
$
E=\begin{pmatrix}
1 & 0 \\
0 & 0
\end{pmatrix}.$
Write two general members $A$ and $B$ as
$$ A=\begin{pmatrix}
x & Y \\
Z & T\end{pmatrix}
\mbox{ and } B=
\begin{pmatrix}
x' &  Y'\\
Z' &  T'\end{pmatrix}.$$
Since the ring commutator $[E, A]$ is nilpotent, we conclude that
$$-[E, A]^2=\begin{pmatrix}
YZ & 0 \\
0  & ZY \end{pmatrix}$$
has zero spectrum
and thus $YZ= 0$.  We shall now show that $YZ'$ is real for any two
members. Now $$BEA =
\begin{pmatrix}
x'x  &  x'Y \\
xZ'  &  Z'Y
\end{pmatrix},$$
so  $xx'YZ' = 0$ by the above.  If $xx'$ is not zero, then $YZ' = 0$, so
we can
assume $xx' = 0$.   Use the reality of the spectrum of $[EA, BE]$, to get
that the square of $[BE,EA]=\begin{pmatrix}
-YZ' &  x'Y \\
   xZ' &  Z'Y\end{pmatrix} $
has positive spectrum. But the trace of this square is
$2(YZ')^2$. Thus $YZ'$ is real as claimed.
Now by irreducibility there is a member $B$ with $Z'$ nonzero. Let
$L=\begin{pmatrix}
0  & 0 \\
Z'  & 0
\end{pmatrix},$
so that the rank-one linear functional $f$ on $n\times n$ matrices defined
by
$f(M) = tr ML$
always has real values on our semigroup. This implies realizability by
Proposition  2.4 of \cite{BMR1}.
\end{proof}

\begin{thm}If $\S\subseteq\M_n(\mathbb{C})$ is an irreducible semigroup in
which all
ring commutators $ST-TS$, $S,T\in\S$ have real spectra, then $\S$ is
realizable.
\end{thm}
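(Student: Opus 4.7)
The plan is to reduce the semigroup problem either to the compact group setting (Corollary~\ref{cor-main}) or, via a rank-compression, to a smaller-dimensional instance of the same theorem, and to induct on the dimension $n$. As a first step I would replace $\S$ by its topological closure and its closure under multiplication by positive real scalars; neither operation spoils irreducibility nor the ring commutator hypothesis, so from now on I may assume that $\S$ is closed and positively homogeneous. Standard results on closed homogeneous matrix semigroups then furnish a nonzero idempotent $E\in\S$ whose rank $r$ equals the minimum nonzero rank among elements of $\S$.

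If $r<n$, I would compress by $E$ and study $\mathcal{T} := \{ESE|_{\mathrm{range}(E)} : S\in\S\}\subseteq\M_r(\CC)$. A standard compression argument shows $\mathcal{T}$ is irreducible on $\CC^r$. Crucially, the ring commutator hypothesis descends: since $E\in\S$ and $\S$ is a semigroup, the elements $ES_1E,ES_2E$ again lie in $\S$, so $[ES_1E,ES_2E]$ is a ring commutator of members of $\S$ and has real spectrum; because this matrix is supported in $\mathrm{range}(E)$ and annihilates $\mathrm{null}(E)$, its spectrum coincides with that of the corresponding commutator in $\mathcal{T}$. By the induction hypothesis applied to $\mathcal{T}$ in dimension $r<n$, the semigroup $\mathcal{T}$ is realizable; consequently by Proposition~2.4 of \cite{BMR1} there is a rank-one $T_0\in\M_r(\CC)$ with $\tr(T_0X)\in\RR$ for all $X\in\mathcal{T}$. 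Padding $T_0$ by zeros using the decomposition $\CC^n=\mathrm{range}(E)\oplus\mathrm{null}(E)$ yields a rank-one matrix $T\in\M_n(\CC)$ satisfying $\tr(TS)=\tr(T_0\cdot ESE|_{\mathrm{range}(E)})\in\RR$ for every $S\in\S$; a second application of Proposition~2.4 of \cite{BMR1} then delivers realizability of $\S$.

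In the remaining case $r=n$, every nonzero element of $\S$ is invertible, and I would normalize by $|\det S|^{1/n}$ to obtain a slice $\S_1\subseteq\mathrm{GL}_n(\CC)$ of elements with $|\det|=1$. The slice $\S_1$ must be bounded, for otherwise a subsequence of rescaled elements would converge (by positive homogeneity and closure of $\S$) to a nonzero element of $\S$ of rank strictly less than $n$, contradicting minimality. Hence $\S_1$ is a compact subsemigroup of $\mathrm{GL}_n(\CC)$, and therefore a compact group, inheriting both irreducibility and the ring commutator property from $\S$. Corollary~\ref{cor-main} then realizes $\S_1$, and thus $\S=\RR_+\S_1$.

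The main obstacle I anticipate is the invertible case $r=n$: both the boundedness of the normalized slice and the passage from compact semigroup to compact group rely on delicate use of closure, homogeneity, and the minimality of $r$, and must be executed with care. The reduction step in the subcritical case is relatively clean, but it hinges on the rank-one functional criterion of \cite{BMR1} to bridge the compressed semigroup and the full semigroup $\S$.
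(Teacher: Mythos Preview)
Your plan --- compress by a minimal-rank idempotent $E$ and reduce either to a smaller instance or to a compact group --- matches the paper when $r>1$. There, both you and the paper observe that $E\S E|_{\mathrm{range}(E)}\setminus\{0\}$ consists of invertibles, hence equals $\RR^+\G$ for a compact group $\G$; Corollary~\ref{cor-main} and a rank-one functional lift then finish the job. Your case $r=n$ is also correct and is simply the paper's ``rank larger than one'' case with $E=I$.

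The genuine gap is at $r=1$. Your induction invokes the theorem in dimension $1$, but the statement is false there: in $\M_1(\CC)=\CC$ every ring commutator vanishes, so the hypothesis is vacuous, while realizability would force the semigroup into $\RR$, which is certainly not automatic. Equivalently, the descended commutator hypothesis on $\mathcal T$ is empty when $r=1$ (each $[ES_1E,ES_2E]=0$) and cannot by itself force $\mathcal T\subseteq\RR$. The paper fills exactly this hole with the separate Lemma~\ref{rank-one}, a nontrivial argument that exploits commutators of the form $[E,S]$ and $[EA,BE]$ --- not merely commutators of already-compressed elements --- to produce a rank-one functional real-valued on the rank-one ideal of $\S$. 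Without an analogue of that lemma your induction has no valid base case, and the rank-one situation is precisely where the work lies.
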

\begin{proof}
With no loss of generality assume that $\S=\overline{\RR^+\S}$.  Let $E$
be the minimal rank idempotent in $\S$.  If the rank of $E$ is
one, then the rank-one ideal in $\S$ is realizable by Lemma \ref{rank-one}
and hence so is $\S$ (because the rank-one linear functional on $\M_n$ defined by $M\mapsto\tr(ME)$, when restricted to $\S$ has real values and thus we can apply Theorem 2.5 of
\cite{RY}; this also follows from Corollary 3.5 of \cite{BMR}).

If the rank of $E$ is larger then one, then $E\S E|_{E(\mathbb{C}^n)} =
\RR^+\G$, where $\G$ is a compact group.  By Corollary \ref{cor-main}
$\G$ is realizable and hence so is $\S$ (by \cite{RY} again).
\end{proof}


\begin{thebibliography}{00}
\bibitem{B} J.~Bernik, \textsl{The eigenvalue field is a splitting field}, Arch. Math 88 (2007), 481--490.

\bibitem{BMR} J.~Bernik, M.~Mastnak, H.~Radjavi, \textsl{Realizing
semigroups and real algebras of compact operators}, J. Math. Anal. Appl.
348 (2008), no. 2, 692--707.

\bibitem{BMR1}  J.~Bernik, M.~Mastnak, H.~Radjavi, \textsl{Positivity and 
matrix semigroups}, Lin. Alg. Appl. 434 (2011), no. 3, 801--812.

\bibitem{C} A.~H.~Clifford, \textsl{Representations induced in an
invariant subgroup}, Ann. Math. (2) 38, 533--550.

\bibitem{MR} M.~Mastnak, H.~Radjavi, \textsl{Structure of finite, minimal
nonabelian groups and triangularization}, Lin. Alg. Appl. 430 (2009),  Issue 7, 1838--1848.

\bibitem{RR} H.~Radjavi, P.~Rosenthal, \textsl{Simultaneous
triangularization}, Springer, New York, 2000.

\bibitem{RR1} H.~Radjavi, P.~Rosenthal, \textsl{Limitations on the size of semigroups of matrices}, Semigroup Forum 76 (2008), 25--31.

\bibitem{RY} H.~Radjavi, B.~Yahaghi, \textsl{On irreducible algebras spanned by triangularizable matrices}, Lin. Alg. Appl. 436 (2012), 2001--2017.

\bibitem{S} D.~A.~Suprunenko, \textsl{Matrix Groups}, AMS Translations of Math. Monographs, Vol. 45, 1976, p.219.
\end{thebibliography}
\end{document}